\numberwithin{equation}{section}
\def\dint{\int}
\def\dbigcup{\bigcup}
\def\dbigcap{\bigcap}
\newtheorem{theorem}{Theorem}[section]
\newtheorem{definition}[theorem]{Definition}
\newtheorem{lemma}[theorem]{Lemma}
\newenvironment{proof}[1][Proof]{\textbf{#1.} }{\ \rule{0.5em}{0.5em}}
\begin{document}

\title{Stratonovich's Signatures of Brownian Motion Determine Brownian
Sample Paths}
\date{}
\author{\emph{{\small {\textsc{By Yves LeJan \ and \ Zhongmin Qian}}}} \\
%EndAName
\emph{{\small {Universit\'{e} Paris-Sud and University of Oxford}}}}
\maketitle

\leftskip1truecm \rightskip1truecm \noindent {\textbf{Abstract.}} The
signature of Brownian motion in $\mathbb{R}^{d}$ over a running time
interval $[0,T]$ is the collection of all iterated Stratonovich path
integrals along the Brownian motion. We show that, in dimension $d\geq 2$,
almost all Brownian motion sample paths (running up to time $T$) are
determined by its signature over $[0,T]$.\footnote[1]{%
\noindent \textit{Key words.} Brownian motion, rough paths, signatures} 
\footnote[2]{%
\noindent\textit{AMS Classification.} 60H10, 60H30, 60J45}

\leftskip0truecm \rightskip0truecm

\section{Introduction}

Let $W=(W_{t}^{1},\cdots ,W_{t}^{d})_{t\geq 0}$ be a Brownian motion in the
Euclidean space of dimension $d\geq 2$. The Stratonovich signature of $W$
over the duration from time $0$ to time $T$, according to K. T. Chen \cite%
{MR0454968} and T. Lyons \cite{MR2630037}, is the formal series with $d$
indeterminates $X_{1},\cdots ,X_{d}$ whose coefficients are iterated
Stratonovich's path integrals of Brownian sample paths:%
\begin{equation}
S(W)_{[0,T]}=\sum_{n=0}^{\infty }\sum_{\pi \in S_{n}}[\pi _{1}\cdots \pi
_{n}]_{0,T}X_{\pi _{1}}\cdots X_{\pi _{n}}  \label{sy-01}
\end{equation}%
where $S_{n}$ denotes the permutation group of $\{1,\cdots ,n\}$, $\sum_{\pi
\in S_{n}}$ runs through permutations $\pi =(\pi _{1},\cdots ,\pi _{n})\in
S_{n}$, and the square bracket $[\pi _{1}\cdots \pi _{n}]_{s,t}$ denotes the
multiple Stratonovich integral of Brownian motion over $[s,t]$, i.e.%
\begin{equation}
\lbrack \pi _{1}\cdots \pi _{n}]_{s,t}=\dint\limits_{s<t_{1}<\cdots
<t_{k}<t}\circ dW_{t_{1}}^{\pi _{1}}\circ \cdots \circ dW_{t_{n}}^{\pi _{n}}%
\text{.}  \label{sy-02}
\end{equation}%
These integrals may be defined by means of It\^{o}'s integration. In fact,
multiple integrals may be defined inductively by%
\begin{equation*}
\lbrack \pi _{1}\cdots \pi _{n}]_{s,t}=\int_{s}^{t}[\pi _{1}\cdots \pi
_{n-1}]_{[s,r]}\circ dW_{r}^{\pi _{n}}
\end{equation*}%
where $\circ d$ indicates the integration in Stratonovich's sense, which in
turn can be expressed in terms of It\^{o}'s and ordinary integrals.

If one is not concerned about underlying algebraic structures defined by
iterated integrals, it is not necessary to approach the Stratonovich
signature through the formal series (\ref{sy-01}). We consider the
collection of all possible iterated Stratonovich integrals $[\pi _{1}\cdots
\pi _{n}]_{0,T}$, emphasizing the fact that they are all taken over a fixed
time interval $[0,T]$, as the Stratonovich signature(s) of Brownian motion
(over $[0,T]$). Since we will work on signatures over a fixed interval, the
lower script $0$ and $T$ will be omitted if no confusion may arise, for the
sake of simplicity of notations. Without losing generality we may from now
on assume that $T=1$.

Since the notion of signatures is so significant in this paper, we thus
would like to present a formal definition.

\begin{definition}
\label{def-01}Let $W=(W_{t})_{t\geq 0}$ be a Brownian motion in $\mathbb{R}%
^{d}$ starting at $0$. Then the Stratonovich signature (or signatures) of $W$
over $[0,1]$ is the collection of all iterated Stratonovich integrals 
\begin{equation*}
\lbrack j_{1}\cdots j_{n}]=\dint\limits_{0<t_{1}<\cdots <t_{k}<1}\circ
dW_{t_{1}}^{j_{1}}\circ \cdots \circ dW_{t_{n}}^{j_{n}}
\end{equation*}%
where $n$ runs through $1,2,\cdots ,$ and $j_{1},\cdots ,j_{n}\in \{1,\cdots
,d\}$.
\end{definition}

The interest for signatures of paths has a long history. First of all,
sequences of multiple iterated integrals arise naturally in Picard's
iteration of solving ordinary differential equations. Multiple iterated
integrals of Brownian motion appeared already in early 1930's in Wiener's
celebrated work on harmonic analysis on the Wiener space, and K. It\^{o}
studied them in terms of his integration theory. Meanwhile, from 1950's to
late1970's, in a series of articles \cite{MR525704}, \cite{MR0454968}, \cite%
{MR0085251}, \cite{Chen1} etc. K. T. Chen demonstrated the usefulness of
iterated integrals along piecewise smooth paths in manifolds. K. T. Chen
showed the interesting algebraic structures defined by sequences of iterated
integrals, developed a representation theory, and established a homotopy
theory in terms of iterated integrals. The importance of multiple
Stratonovich integrals, however, surprisingly was not recognized until the
important contributions by Wong-Zakai \cite{MR0183023}, Ikeda and Watanabe 
\cite{MR637061}, in which the convergence theorem for solutions to
stochastic differential equations in Stratonovich's sense was proved. The
definite role played by iterated Stratonovich's integrals was finally
revealed in T. Lyons \cite{MR1654527} (also see \cite{MR2036784}) in which a
universal limit theorem for solutions of Stratonovich's stochastic
differential equations was proved. T. Lyons has realized that the key
elements for defining an integration theory along a continuous path which is
not necessary piecewise smooth is the sequence of iterated integrals that
must be specified. This idea led to the discovery of the $p$-variation
metric among continuous paths with finite variations, which allows to
develop the theory of rough paths.

It has been conjectured that the signature of a path over a fixed time
duration $[0,1]$, which can be read out at the terminal time $1$, should be
a good summary of information about the flow of timely ordered events,
recorded in its path during time $0$ to time $1$. K. T. Chen \cite{Chen1}
first proved that indeed it is possible to recover the whole path (up to
tree-like components of the path which are not counted in its signature) by
reading its signature. B. Hambly and T. Lyons \cite{MR2630037} extended and
quantified Chen's result to rectifiable curves in multi-dimensional spaces.
Unfortunately, these results are not applicable to interesting random
curves, since, for example, almost all sample paths of a non-trivial
diffusion process are not rectifiable.

In this article, we demonstrate that for $d\geq 2$ almost all $d$%
-dimensional Brownian paths can be recovered from its Stratonovich's
signature. In other words, theoretically, all information recorded in
Brownian motion from $0$ to $1$ can be read out from the Stratonovich
signature over $[0,1]$.

To state our main result more precisely, we need to introduce more
notations. Let $\mathcal{F}_{t}^{0}=\sigma \{W_{s}:s\leq t\}$ be the
filtration generated by $W$, and $\mathcal{F}_{1}$ be the completion of $%
\mathcal{F}_{1}^{0}$ (under the Brownian measure $P$), and $\mathcal{G}_{1}$
be the complete $\sigma $-algebra generated by the Stratonovich signatures,
i.e. the completion of the $\sigma $-algebra $\sigma \{[\pi _{1}\cdots \pi
_{n}]_{0,1}:\pi \in S_{n}$; $n\in \mathbb{N}\}$.

Our main result may be stated as follows

\begin{theorem}
\label{th-main} $\mathcal{F}_{1}=\mathcal{G}_{1}$. Therefore the
Stratonovich signature determines Brownian sample paths almost surely.
\end{theorem}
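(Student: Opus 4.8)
Throughout, the inclusion $\mathcal G_1\subseteq\mathcal F_1$ is automatic: expanding each multiple Stratonovich integral $[\pi_1\cdots\pi_n]_{0,1}$ through It\^{o} integration exhibits it as an $\mathcal F_1^0$-measurable random variable, so the $\sigma$-algebra it generates, and hence its completion $\mathcal G_1$, lies in the complete $\sigma$-algebra $\mathcal F_1$. The content of the theorem is the reverse inclusion. Since $\mathcal G_1$ is complete and Brownian sample paths are continuous, it suffices to prove that $W_t$ is $\mathcal G_1$-measurable for every $t$ in a fixed countable dense subset of $[0,1]$: this yields $\mathcal F_1^0\subseteq\mathcal G_1$, and passing to completions gives $\mathcal F_1\subseteq\mathcal G_1$. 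The level-one coefficients already give $W_1=([1]_{0,1},\dots,[d]_{0,1})\in\mathcal G_1$ (and, for instance, the L\'{e}vy areas $[pq]_{0,1}-[qp]_{0,1}\in\mathcal G_1$), so the whole problem is to reconstruct $W$ at interior times from its signature.

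The first ingredient I would develop is a closure property of $\mathcal G_1$. For a polynomial $F\colon\mathbb R^d\to\mathbb R$, expanding $F(W_s)$ by the shuffle identities and using $\int_0^1[\mu]_{0,s}\circ dW^i_s=[\mu\,i]_{0,1}$ shows $\int_0^1 F(W_s)\circ dW^i_s\in\mathcal G_1$; iterating this — inserting polynomials at each stage of a nested Stratonovich integral whose outermost integration runs to time $1$ — and using the $L^2$-convergence of Picard iterates, one finds that $\mathcal G_1$ contains $Z_1$ for the solution $Z$ of any Stratonovich equation $dZ_t=V_0(Z_t)\,dt+\sum_j V_j(Z_t)\circ dW^j_t$ driven by $W$ with smooth vector fields of suitable growth. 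Specialising, group the coefficients $[\,\underbrace{j\cdots j}_{k}\ i\,]_{0,1}=\int_0^1\frac{(W^j_s)^k}{k!}\circ dW^i_s$ and sum the series in $\lambda^k$: this gives $\int_0^1 e^{\lambda W^j_s}\circ dW^i_s\in\mathcal G_1$, the series converging in $L^2$ by Gaussian moment bounds. When $i\ne j$ the Stratonovich correction vanishes, so $\int_0^1 e^{\lambda W^j_s}\,dW^i_s\in\mathcal G_1$ for all $\lambda$, and hence, by approximating bounded Borel functions by exponentials, $\int_0^1 f(W^j_s)\,dW^i_s\in\mathcal G_1$ for every bounded Borel $f$. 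This step is exactly where the hypothesis $d\ge 2$ enters — one needs a second coordinate to integrate against; for $d=1$ the signature over $[0,1]$ is a function of $W_1$ alone and the theorem is false.

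The decisive, and I expect hardest, step is to upgrade this into a genuinely \emph{time-like} $\mathcal G_1$-measurable functional of the path and then to read off $W_t$. Two things must be handled: (i) the signature over $[0,1]$ is invariant under reparametrisation, so on its own it cannot see the Brownian clock — and indeed every elementary algebraic combination of signature coefficients one writes down turns out to be ``circular'', determining an It\^{o}-type quantity only modulo precisely the Lebesgue-in-time integral one is trying to isolate; and (ii) a priori two distinct sample paths could share a signature, which is handled by a measure-zero argument showing Brownian motion is almost surely free of tree-like pieces. Obstacle (i) is the substantial one, and what breaks the circularity is that $W$ is not an arbitrary geometric rough path but a Brownian motion with deterministic quadratic variation. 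The plan is to recover, as $\mathcal G_1$-measurable objects, the occupation measures — equivalently the local-time fields — of the coordinate processes $W^i$, by a limiting/ergodic argument on the signature coefficients that genuinely uses the law of $W$ rather than merely its pathwise rough-path structure, in the spirit of the length recovery of Hambly and Lyons \cite{MR2630037} but adapted to the stochastic scaling of Brownian signatures. Once a time parametrisation lies in $\mathcal G_1$ one recovers the signatures $S(W)_{[0,t]}$ over the initial segments for each fixed $t$, whose level-one part is $W_t$; combined with the reductions above and sample-path continuity, this gives $\mathcal F_1=\mathcal G_1$, so the Stratonovich signature determines the Brownian path almost surely.
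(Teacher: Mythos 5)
Your preliminary reductions are sound and run roughly parallel to the paper's: $\mathcal{G}_1\subseteq\mathcal{F}_1$ is routine, and by approximating with polynomial coefficients and taking $L^2$ limits you correctly obtain that iterated Stratonovich integrals of compactly supported smooth $1$-forms, i.e., the paper's ``extended signatures'' $[\alpha^1\cdots\alpha^k]_{0,1}$, are $\mathcal{G}_1$-measurable (cf.\ Lemma~\ref{lem-t1}); your specialization $\int_0^1 f(W^j_s)\,dW^i_s\in\mathcal{G}_1$ is a particular case, and your observation about $d=1$ is correct. You also put your finger on the right obstacle: reparametrization invariance of the signature. But the step you then present as the core of the argument --- recovering the occupation measures or local-time fields as $\mathcal{G}_1$-measurable objects via ``a limiting/ergodic argument on the signature coefficients that genuinely uses the law of $W$'' --- is not an argument, it is a wish. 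No candidate signature functional is proposed, no mechanism is offered for extracting a time-dependent quantity like occupation measure from a family of reparametrization-invariant numbers, and no reason is given that such a recovery should even be possible. The closing reduction is also broken: knowing a time parametrization does not let you obtain $S(W)_{[0,t]}$ from $S(W)_{[0,1]}$; Chen's identity $S(W)_{[0,1]}=S(W)_{[0,t]}\otimes S(W)_{[t,1]}$ factorizes the group-like element but gives no way to solve for one factor from the product alone.

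The paper resolves the obstacle by a different, concrete mechanism that sidesteps time recovery entirely. It lays an $\varepsilon$-grid of rounded squares $H^\varepsilon_{\boldsymbol z}\subset K^\varepsilon_{\boldsymbol z}\subset Z^\varepsilon_{\boldsymbol z}$ on $\mathbb{R}^2$ and places in each cell a $1$-form $\phi^{\boldsymbol z}$ supported in $Z^\varepsilon_{\boldsymbol z}$, the supports being pairwise disjoint. The decisive analytic input (Lemmas~\ref{lem-m1} and~\ref{lem-key1}) is a hypoellipticity argument: lifting to the degenerate diffusion $(W^1,W^2,\int W^2\circ dW^1)$ and invoking the Ben Arous--Kusuoka--Stroock smoothness theorem for exit measures, one shows that the integral of $\phi$ across one excursion through a cell has a conditional density, hence the extended signature $[\phi^{\boldsymbol z_0}\cdots\phi^{\boldsymbol z_m}]_{0,1}$ is almost surely nonzero precisely for the admissible word $\langle\boldsymbol z_0\cdots\boldsymbol z_m\rangle$ recording the order of deeply visited cells, and vanishes for every other word. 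Thus the ordered sequence of visited cells is read off from which extended signatures are nonzero; this is an order-of-events statement, not a clock recovery. The induced polygonal approximations then converge to the Brownian path as $\varepsilon_n\downarrow 0$ (Lemmas~\ref{lemc1}--\ref{lemadd1}), giving $\mathcal{F}_1\subseteq\mathcal{G}_1$. It is exactly this nonvanishing lemma for iterated integrals of disjointly supported forms --- the replacement for your occupation-measure step --- that your proposal is missing, and without something in its place the proof does not close.
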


To prove this theorem, we need to develop a method of reconstructing almost
all Brownian sample paths given their signatures. We will come to this point
shortly.

In order to appreciate why Stratonovich signatures are able to represent the
sample paths of Brownian motion, let us look at how to obtain iterated
integrals of smooth differential forms along Brownian motion paths in terms
of the Stratonovich signatures. The remarkable fact, which certainly goes
back to K. T. Chen \cite{Chen1} for the deterministic case, is that any
polynomials of Brownian motion (evaluated at a fixed time $1$) is a linear
combination of the signatures over $[0,1]$. In fact%
\begin{equation}
W_{t}^{j_{1}}\cdots W_{t}^{j_{n}}=\sum_{\pi \in S_{n}}[j_{\pi _{1}}\cdots
j_{\pi _{n}}]_{0,t}\text{.}  \label{poly-01}
\end{equation}%
This formula can be proved by integrating by parts:%
\begin{equation*}
W_{t}^{j_{1}}W_{t}^{j_{2}}=\left[ j_{1}j_{2}\right] _{0,t}+\left[ j_{2}j_{1}%
\right] _{0,t}
\end{equation*}%
and for $n\geq 2$ 
\begin{eqnarray*}
W_{t}^{j_{1}}\cdots W_{t}^{j_{n}}W_{t}^{j_{n+1}} &=&\sum_{\pi \in
S_{n}}\int_{0}^{t}\left[ j_{\pi _{1}}\cdots j_{\pi _{n}}\right] _{0,s}\circ
dW_{s}^{j_{n+1}} \\
&&+\sum_{\pi \in S_{n}}\int_{0}^{t}W_{s}^{j_{n+1}}\circ d\left[ j_{\pi
_{1}}\cdots j_{\pi _{n}}\right] _{0,s} \\
&=&\sum_{\pi \in S_{n}}\left[ j_{\pi _{1}}\cdots j_{\pi _{n}}j_{n+1}\right]
_{0,t} \\
&&+\sum_{\pi \in S_{n}}\int_{0}^{t}W_{s}^{j_{n+1}}\left[ j_{\pi _{1}}\cdots
j_{\pi _{n-1}}\right] _{s}\circ dW_{s}^{j_{\pi _{n}}}
\end{eqnarray*}%
and (\ref{poly-01}) follows. If $\alpha ^{1}$, $\cdots $, $\alpha ^{k}$ are
smooth differential forms on $\mathbb{R}^{d}$ with compact supports, then
iterated Stratonovich integrals $[\alpha ^{1}\cdots \alpha ^{k}]_{s,t}$ are
defined inductively by 
\begin{equation*}
\lbrack \alpha ^{1}\cdots \alpha ^{k}]_{s,t}=\int_{s}^{t}[\alpha ^{1}\cdots
\alpha ^{k-1}]_{s,u}\alpha ^{k}(\circ dW_{u})\text{.}
\end{equation*}%
Since polynomials are dense in $C^{k}$ functions for any $k$ under uniform
convergence over compact subsets, therefore all iterated Stratonovich
integrals of $1$-forms against $W$ are measurable functionals of the
signatures. This is the context of the following lemma.

\begin{lemma}
\label{lem-t1}If $\alpha ^{1}$, $\cdots $, $\alpha ^{k}$ are smooth
differential forms on $\mathbb{R}^{d}$ with compact supports, then $[\alpha
^{1}\cdots \alpha ^{k}]_{0,1}$ is $\mathcal{G}_{1}$-measurable.
\end{lemma}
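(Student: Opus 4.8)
The plan is to establish the lemma in two steps: first for differential forms whose coefficients are polynomials, where $[\alpha^{1}\cdots\alpha^{k}]_{0,1}$ will turn out to be a finite linear combination of signatures over $[0,1]$ and hence trivially $\mathcal{G}_{1}$-measurable, and then for arbitrary smooth compactly supported forms by a density-plus-completeness argument. Throughout, write $\alpha^{i}=\sum_{m}a_{m}^{i}\,dx^{m}$.

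\textbf{Step 1 (polynomial coefficients).} Suppose each $a_{m}^{i}$ is a polynomial. I claim, by induction on $k$, that there are finitely many words $L=(l_{1},\dots,l_{|L|})$ in $\{1,\dots,d\}$ and constants $c_{L}$ \emph{not depending on $u$} such that $[\alpha^{1}\cdots\alpha^{k}]_{0,u}=\sum_{L}c_{L}[L]_{0,u}$ for every $u\in[0,1]$. The case $k=0$ is trivial. For the inductive step, the inductive hypothesis expresses $[\alpha^{1}\cdots\alpha^{k-1}]_{0,u}$ as a finite linear combination of signatures $[L]_{0,u}$, while formula (\ref{poly-01}) applied to the polynomial $a_{m}^{k}$ expresses $a_{m}^{k}(W_{u})$ as a finite linear combination of signatures $[M]_{0,u}$; the shuffle relation for iterated Stratonovich integrals — the general form of the repeated integration by parts used to prove (\ref{poly-01}) — rewrites each product $[L]_{0,u}[M]_{0,u}$ as a finite sum of signatures $[N]_{0,u}$; and finally $\int_{0}^{1}[N]_{0,u}\circ dW_{u}^{m}=[Nm]_{0,1}$ by the very definition of the bracket. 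Hence $[\alpha^{1}\cdots\alpha^{k}]_{0,1}=\sum_{m}\int_{0}^{1}[\alpha^{1}\cdots\alpha^{k-1}]_{0,u}\,a_{m}^{k}(W_{u})\circ dW_{u}^{m}$ is a finite linear combination of signatures over $[0,1]$, and the claim follows; in particular $[\alpha^{1}\cdots\alpha^{k}]_{0,1}$ is $\mathcal{G}_{1}$-measurable.

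\textbf{Step 2 (general case).} Given $a_{m}^{i}\in C_{c}^{\infty}(\mathbb{R}^{d})$, use density of polynomials in $C^{\ell}$ on compact sets to choose polynomials $p_{m,N}^{i}$ with $p_{m,N}^{i}\to a_{m}^{i}$, together with all partial derivatives, uniformly on every ball as $N\to\infty$; set $\alpha_{N}^{i}=\sum_{m}p_{m,N}^{i}\,dx^{m}$. By Step 1 each $[\alpha_{N}^{1}\cdots\alpha_{N}^{k}]_{0,1}$ is $\mathcal{G}_{1}$-measurable, so since $\mathcal{G}_{1}$ is complete it suffices to prove that $[\alpha_{N}^{1}\cdots\alpha_{N}^{k}]_{0,1}\to[\alpha^{1}\cdots\alpha^{k}]_{0,1}$ in probability. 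I would localize with $\tau_{R}=\inf\{t:|W_{t}|\geq R\}$: since $P(\tau_{R}\leq1)\to0$ as $R\to\infty$, it is enough to show that for each fixed $R$ the difference tends to $0$ in probability on the event $\{\tau_{R}>1\}$, on which the iterated integrals coincide with the ones stopped at $1\wedge\tau_{R}$. Expanding a stopped $k$-fold iterated Stratonovich integral by It\^{o}'s formula writes it as a finite sum of time-ordered iterated It\^{o} and Lebesgue integrals whose integrands are products of the coefficients $a_{m}^{i}$ and finitely many of their partial derivatives evaluated along $W$, all of which stay bounded while $W$ remains in $\overline{B_{R}}$; It\^{o}'s isometry together with the Cauchy--Schwarz inequality then yields convergence in $L^{2}$ of the stopped integrals, hence convergence in probability on $\{\tau_{R}>1\}$, and letting $R\to\infty$ completes the proof.

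The It\^{o}-calculus expansion and the accompanying $L^{2}$ estimates are routine. The points I expect to require genuine care are the bookkeeping in Step 1 — keeping the representation $[\alpha^{1}\cdots\alpha^{k}]_{0,u}=\sum_{L}c_{L}[L]_{0,u}$ valid simultaneously for all $u$ with $u$-independent coefficients, so that it may legitimately be substituted inside the outer Stratonovich integral — and the localization in Step 2, namely verifying that, once stopped at $\tau_{R}$, the iterated integrals depend on the forms only through their restrictions to $\overline{B_{R}}$, which is precisely what renders the global unboundedness of the approximating polynomials harmless.
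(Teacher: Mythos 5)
Your proposal is correct and follows essentially the same route as the paper: first reduce the polynomial-coefficient case to a finite linear combination of signatures via (\ref{poly-01}) and the shuffle/integration-by-parts identity, then pass to general smooth compactly supported forms by polynomial approximation and completeness of $\mathcal{G}_{1}$. Your Step 2 is in fact more careful than the paper's one-line appeal to ``$C^{k+1}$-norm'' convergence, since the stopping-time localization at $\tau_{R}$ correctly handles the fact that polynomials only approximate the coefficients on compact sets.
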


\begin{proof}
If $\alpha ^{l}$ have polynomial coefficients, then we have seen that $%
[\alpha ^{1}\cdots \alpha ^{k}]_{0,1}$ is a linear combination of the
Stratonovich signatures, so it is $\mathcal{G}_{1}$-measurable. In general
case, we may approximate $\alpha ^{1}$, $\cdots $, $\alpha ^{k}$ by
polynomials $\alpha _{n}^{1}$, $\cdots $, $\alpha _{n}^{k}$ in $C^{k+1}$
norm, so that%
\begin{equation*}
\lbrack \alpha _{n}^{1}\cdots \alpha _{n}^{k}]_{s,t}\rightarrow \lbrack
\alpha ^{1}\cdots \alpha ^{k}]_{s,t}
\end{equation*}%
in $L^{2}(\Omega ,\mathcal{F},P)$. This yields that $[\alpha ^{1}\cdots
\alpha ^{k}]_{0,T}$ is $\mathcal{G}_{T}$-measurable.
\end{proof}

These iterated Stratonovich integrals $[\alpha ^{1}\cdots \alpha ^{k}]_{0,1}$
may be considered as "extended\textquotedblright\ signatures of $W$ over $%
[0,1]$.

Since there is no essential differences in our proof of Theorem \ref{th-main}
between dimension two and the higher dimensional case, we therefore
concentrate on the case $d=2$. The main idea and the key steps in the proof
of Theorem \ref{th-main} are described as follows.

To construct approximations of Brownian motion $W$ in terms of a countable
family of extended signatures, for each $\varepsilon >0$ we construct an $%
\varepsilon $-grid so that $\mathbb{R}^{2}$ is divided into squares with
center at $\boldsymbol{z}\varepsilon =(z_{1}\varepsilon ,z_{2}\varepsilon )$
and wide $\varepsilon $, and let 
\begin{equation*}
S_{\boldsymbol{z}}=\{(x_{1},x_{2}):|x_{1}-z_{1}\varepsilon
|+|x_{2}-z_{2}\varepsilon |\leq \frac{1}{2}\varepsilon (1-\varepsilon )\}
\end{equation*}%
which is strictly located inside the squares with the same center. We
naturally construct an approximation by polygons which join the centers of
the squares $S_{\boldsymbol{z}}$ which have been visited by the Brownian
motion paths $W$. It is not very difficult to show these polygons converge
to Brownian motion paths almost surely, and we want to show that these
polygonal approximations are indeed determined by the Stratonovich
signatures of $W$. To this end, we construct a smooth differential $1$-form $%
\phi ^{\boldsymbol{z}}$ which has a compact support inside the squares $S_{%
\boldsymbol{z}}$ so that for different indices $\boldsymbol{z}\in \mathbb{Z}%
^{2}$, these differential 1-forms $\phi ^{\boldsymbol{z}}$ have disjoint
supports. The key observation is that the Stratonovich integral $\int \phi ^{%
\boldsymbol{z}}(\circ dW)$ does not vanish almost surely over the duration
that the Brownian motion has visited $S_{\boldsymbol{z}}$. This crucial fact
allows us to identify those squares the Brownian motion has visited entirely
in terms of the signatures of the Brownian motion.

\section{Several technical facts}

In this section we establish several technical facts which will be used in
the proof of Theorem \ref{th-main}.

A planer square is a nice domain but its boundary has four corners and thus
is not $C^{1}$. For the technical reasons we consider a domain obtained from
a square by replacing the portion of the boundary near each corner by a
quarter of small circles. More precisely, for a small $\frac{1}{4}%
>\varepsilon >0$, and, as we will use this parameter $\varepsilon $ for
other constructions, for $\beta \gg 1$, let 
\begin{equation*}
D=\left\{ (x_{1},x_{2}):0\leq x_{1},x_{2}\leq \frac{1}{2}\right\} \setminus
\left\{ \left\vert x_{1}-\frac{1}{2}+\varepsilon ^{\beta }\right\vert
^{2}+\left\vert x_{1}-\frac{1}{2}+\varepsilon ^{\beta }\right\vert ^{2}\geq
\varepsilon ^{2\beta }\right\}
\end{equation*}%
and the typical planer domain we will handle is%
\begin{equation}
G=\{(x_{1},x_{2}):(|x_{1}|,|x_{2}|)\in D\}\text{.}  \label{domaing}
\end{equation}

For $a>0$, $G_{a}$ denotes the similar planer domain $aG$, i.e. $%
G_{a}=\{x=(x_{1},x_{2}):(ax_{1},ax_{2})\in G\}$.

Let $W_{t}=(W_{t}^{1},W_{t}^{2})$ be a two dimensional Brownian motion on a
probability space $(\Omega ,\mathcal{F},P)$, and let $a_{3}>a_{2}>a_{1}$. Let%
\begin{equation*}
S_{0}=\inf \{t>0:W_{t}\in \partial G_{a_{3}}\}\text{,}
\end{equation*}%
\begin{equation*}
S_{1}=\inf \{t>S_{0}:W_{t}\in \partial G_{a_{1}}\}
\end{equation*}%
and%
\begin{equation*}
S_{2}=\inf \{t>S_{1}:W_{t}\in \partial G_{a_{2}}\}
\end{equation*}%
which are stopping times, finite almost surely. We are interested in the
distribution of the random variable $X=\int_{S_{0}}^{\tau }\phi (\circ
dW_{s})$, where $\phi $ is a differential $1$-form which coincides with $%
x^{2}dx^{1}$ on $G_{a_{2}}$, conditional to $\{S_{1}<\tau \}$.

To this end, consider the diffusion process $X=(X^{1},X^{2},X^{3})$ in $%
\mathbb{R}^{3}$ associated with the following stochastic differential
equations%
\begin{equation}
dX_{t}^{1}=dW_{t}^{1}\text{; \ }dX_{t}^{2}=dW_{t}^{2}\text{; \ }%
dX_{t}^{3}=X_{t}^{2}\circ dW_{t}^{1}\text{.}  \label{sde-r1}
\end{equation}%
It is an easy exercise to calculate the infinitesimal generator of $X$,
which is $L=\frac{1}{2}\left( A_{1}^{2}+A_{2}^{2}\right) $, where $A_{1}=%
\frac{\partial }{\partial x_{1}}+x_{2}\frac{\partial }{\partial x_{3}}$ and $%
A_{2}=\frac{\partial }{\partial x_{2}}$. In particular, the Lie bracket $%
[A_{1},A_{2}]=-\frac{\partial }{\partial x^{3}}$, so that $L$ is
hypoelliptic (Theorem 1.1, page 149, H\"{o}mander \cite{MR0222474}).

\begin{lemma}
\label{lem-m1}Let $W$ be Brownian motion in $\mathbb{R}^{2}$ on $(\Omega ,%
\mathcal{F},P)$ started from a point at $\partial G_{a_{1}}$, $S=\inf
\{t>0:W_{t}\in \partial G_{a_{2}}\}$, and $\xi =$ $\int_{0}^{S}W_{s}^{2}%
\circ dW_{s}^{1}$. Then, for any $y\in \partial G_{a_{2}}$, the conditional
distribution $P\{\xi \in dz|W_{S}=y\}$ has a continuous density function in $%
z$.
\end{lemma}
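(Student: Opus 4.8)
\textbf{The plan.} Identify $\xi$ with the terminal value of the third coordinate of the diffusion (\ref{sde-r1}). Since the It\^o--Stratonovich correction $\frac12\langle W^2,W^1\rangle$ vanishes, $\xi=X^3_S$ where $X=(W^1,W^2,X^3)$ solves (\ref{sde-r1}) from $X_0=(y_0,0)$, $y_0\in\partial G_{a_1}$ the given starting point, and $S$ is the first exit time of $(X^1,X^2)$ from $G_{a_2}$. Thus the lemma asserts that the exit law of $X$ from the cylinder $\mathcal C:=G_{a_2}\times\mathbb R$, started at the interior point $(y_0,0)$, disintegrated over the $\partial G_{a_2}$-component of the exit position, has a continuous density in the fibre ($x_3$-) variable. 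Two structural facts will be used: $L$ is hypoelliptic (H\"ormander, as already noted), and both $L$ and $\mathcal C$ are invariant under translation in $x_3$; moreover $\partial\mathcal C=\partial G_{a_2}\times\mathbb R$ is non-characteristic for $L$, because for a defining function $\rho(x)=\psi(x_1,x_2)$ of $\partial\mathcal C$ one has $A_1\rho=\partial_1\psi$ and $A_2\rho=\partial_2\psi$, and these cannot both vanish on the regular curve $\partial G_{a_2}$.

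First I would remove the $x_3$-direction by a partial Fourier transform. For a test function $h$ on $\partial G_{a_2}$ put $v_\eta(x_1,x_2):=E_{(x_1,x_2,0)}\!\big[h(W_S)\,e^{-i\eta X^3_S}\big]$; conjugating $A_1,A_2$ by $e^{-i\eta x_3}$ shows that $v_\eta$ solves the Dirichlet problem $\mathcal L_\eta v_\eta=0$ in $G_{a_2}$, $v_\eta=h$ on $\partial G_{a_2}$, where $\mathcal L_\eta=\frac12\big[(\partial_1-i\eta x_2)^2+\partial_2^2\big]$ is the magnetic Laplacian with constant field $\eta$, a uniformly elliptic operator with smooth coefficients. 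Integration by parts gives $\langle\mathcal L_\eta v,v\rangle_{L^2(G_{a_2})}\le 0$ with equality, under Dirichlet conditions, only for $v\equiv 0$; hence $0$ is never a Dirichlet eigenvalue and $v_\eta(y_0)=\int_{\partial G_{a_2}}P_\eta(y_0,y)h(y)\,d\ell(y)$ for a Poisson kernel $P_\eta(y_0,\cdot)$ with respect to arclength. At $\eta=0$, $\mathcal L_0=\frac12\Delta$ and $P_0(y_0,\cdot)>0$ is the classical, smooth, strictly positive harmonic-measure density of $G_{a_2}$. Comparing with $v_\eta(y_0)=\iint h(y)\,e^{-i\eta z}\,P\{W_S\in dy,\ \xi\in dz\}$ identifies the law of $W_S$ as $P_0(y_0,y)\,d\ell(y)$ and the conditional characteristic function as $E[e^{-i\eta\xi}\mid W_S=y]=P_\eta(y_0,y)/P_0(y_0,y)$.

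It then remains to prove that $\eta\mapsto P_\eta(y_0,y)$, and each of its $\eta$-derivatives, decay faster than any power of $|\eta|$, uniformly in $y$; this is where hypoellipticity enters quantitatively and is the main obstacle. The mechanism is magnetic confinement: $-\mathcal L_\eta$ carries the effective potential $\sim\eta^2x_2^2$ and its bottom Dirichlet eigenvalue grows at least linearly in $|\eta|$, so any solution of $\mathcal L_\eta v=0$ with bounded boundary data decays exponentially from $\partial G_{a_2}$ into the interior at a rate $\gtrsim|\eta|$ times distance (an Agmon-type estimate, reproduced after differentiating the equation in $\eta$ to bound $\partial_\eta^k P_\eta$ up to polynomial factors); since $y_0\in\partial G_{a_1}$ lies in the interior of $G_{a_2}$ at distance $\ge\mathrm{dist}(\partial G_{a_1},\partial G_{a_2})>0$ from $\partial G_{a_2}$, this yields $|P_\eta(y_0,y)|\lesssim e^{-c|\eta|}$ and likewise for derivatives. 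Consequently $\eta\mapsto E[e^{-i\eta\xi}\mid W_S=y]$ lies in $L^1$ together with all its derivatives, so $q_y(z):=\frac1{2\pi}\int e^{i\eta z}\,\frac{P_\eta(y_0,y)}{P_0(y_0,y)}\,d\eta$ is a smooth bounded function; running the computation backwards against smooth test functions of both $y$ and $z$, together with Fubini, shows that $q_y(z)$ is the density of the regular conditional law $P\{\xi\in dz\mid W_S=y\}$ for every $y\in\partial G_{a_2}$, and it is continuous (indeed jointly so in $(y,z)$), which is the claim. An alternative packaging of the same analytic content would be to write the exit kernel of $X$ from $\mathcal C$ as the conormal derivative of the Green's function of $L$ in $\mathcal C$ and invoke boundary regularity for the non-characteristic hypoelliptic Dirichlet problem; or to argue probabilistically via non-degeneracy of the Malliavin matrix of the pair $(\xi,W_S)$.
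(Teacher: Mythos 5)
Your proof starts from the same structural observation as the paper — identify $\xi$ with the third coordinate of the diffusion~(\ref{sde-r1}) at the exit time of the cylinder $D=G_{a_2}\times\mathbb R$, and note that $L$ is hypoelliptic and $\partial D$ is non-characteristic — but then diverges. The paper stops there and cites Theorem~1.22 of Ben~Arous--Kusuoka--Stroock \cite{MR738578}: under exactly these two hypotheses the Poisson measure of $L$ on $D$ has a smooth density on $\partial D$, and disintegrating over the $\partial G_{a_2}$-component gives the lemma. You instead exploit the $x_3$-translation invariance directly: partial Fourier transform in $x_3$ reduces the exit problem to the family of Dirichlet problems for the 2D magnetic Laplacians $\mathcal L_\eta$ on $G_{a_2}$, the conditional characteristic function is identified as the ratio $P_\eta(y_0,y)/P_0(y_0,y)$ of magnetic Poisson kernels, and rapid decay in $\eta$ (from magnetic confinement / Agmon-type bounds, using that $y_0$ lies at positive distance from $\partial G_{a_2}$) lets you invert the Fourier transform. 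This is an essentially self-contained, hands-on replacement for the cited theorem, and it gives more (joint smoothness in $(y,z)$, not just continuity in $z$) at the cost of more work. One caution: the step ``bottom Dirichlet eigenvalue grows linearly in $|\eta|$, hence the Poisson kernel decays exponentially at rate $\gtrsim|\eta|$ times distance'' is stated more confidently than it is proved — the effective potential $\eta^2x_2^2$ degenerates on the line $x_2=0$ through the domain, so a naive Agmon estimate along that direction does not apply, and one should instead appeal to the Gaussian off-diagonal decay $\sim e^{-c|\eta|\,|x-y|^2}$ of the constant-field magnetic heat/Green kernel (Mehler formula plus domain monotonicity), together with $\eta$-dependent boundary Schauder estimates that are only polynomially large, to control $P_\eta$ and its $\eta$-derivatives. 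With that step fleshed out the argument is sound; note also that for the stated conclusion (continuity of the density) one only needs integrable decay of the characteristic function, so even a rate $O(|\eta|^{-2})$ would do, which gives you considerable slack.
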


\begin{proof}
Let $D=G_{a_{2}}\times \mathbb{R}^{1}$, and $S=\inf \{t\geq 0:X_{t}\notin
D\} $ the first exit time of the diffusion process $X$. Then, $D$ has a $%
C^{1}$-boundary (this is the reason for which we use rounded squares) and
the condition required in \cite{MR738578} is satisfied, as the normal to the
boundary belongs to the plane spanned by $A_1$ and $A_2$.  Thus, according
to a theorem of Ben Arous, Kusuoka and Stroock (Theorem 1.22, page 181, in 
\cite{MR738578}), the Poisson measure of $L$ on the open domain $D$ has a
(smooth) density, which implies that the distribution of $X_{S}$ has a
continuous density function on $\partial D$ with respect to the Lebesgue
measure on $\partial D$. Therefore the conditional distribution $P\{\xi \in
dz|W_{S}=y\}$ has a continuous density on $\mathbb{R}^{1}$ for $y\in
\partial G_{a_{2}}$.
\end{proof}

Let $f(x_{1},x_{2})$ be a smooth function on $\mathbb{R}^{2}$ with a support
in $G_{a_{3}}$ such that $f(x_{1},x_{2})=x_{2}$ on $G_{a_{2}}$. Consider the
smooth differential 1-form $\phi =f(x_{1},x_{2})dx_{1}$ on $\mathbb{R}^{2}$.

\begin{lemma}
Under above assumptions and notations. Let $Z=\int_{S_{1}}^{S_{2}}\phi
(\circ W_{S})$. Then the conditional distribution of $Z$ given $%
W_{S_{1}}=(x_{1},x_{2})$ and $W_{S_{2}}=(y_{1},y_{2})$ has a continuous
density function, i.e.%
\begin{equation}
P\{Z\in
dz|W_{S_{1}}=(x_{1},x_{2}),W_{S_{2}}=(y_{1},y_{2})%
\}=p((x_{1},x_{2}),(y_{1},y_{2}),z)dz  \label{des-01}
\end{equation}%
for some nonnegative function $p$.
\end{lemma}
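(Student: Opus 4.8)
The key idea is to reduce this statement to Lemma~\ref{lem-m1} by conditioning on the Brownian path up to the intermediate stopping time $S_{1}$ and using the Markov property. Here is how I would proceed.

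First, I would write $Z = \int_{S_1}^{S_2}\phi(\circ dW_s)$ and use the strong Markov property at $S_1$. Conditionally on $W_{S_1}=(x_1,x_2)$, the process $(W_{S_1+t}-W_{S_1})_{t\ge 0}$ is a Brownian motion started at $0$, so $W_{S_1+t}=(x_1,x_2)+B_t$ for a fresh Brownian motion $B$, and $S_2-S_1$ equals $S:=\inf\{t>0: (x_1,x_2)+B_t\in\partial G_{a_2}\}$. Now I would compute how $Z$ depends on this restarted path. Since $\phi=f\,dx_1$ with $f(x_1,x_2)=x_2$ on $G_{a_2}$, and the path from $S_1$ to $S_2$ lies in the closure of $G_{a_2}$ (it starts on $\partial G_{a_1}$, which is interior to $G_{a_2}$, and stops when it first reaches $\partial G_{a_2}$), the form $\phi$ agrees with $x_2\,dx_1$ along the entire relevant portion of the path. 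Hence, writing $W_s=(x_1,x_2)+B_{s-S_1}$,
\begin{equation*}
Z=\int_0^{S}\big((x_2)+B_u^2\big)\circ dB_u^1 = x_2\,B_S^1+\int_0^S B_u^2\circ dB_u^1 = x_2(W_{S_2}^1-x_1)+\xi,
\end{equation*}
where $\xi=\int_0^S B_u^2\circ dB_u^1$ is exactly the random variable from Lemma~\ref{lem-m1} (with the roles of $a_1$ and $a_2$ as there, and starting point the interior point corresponding to $(x_1,x_2)\in\partial G_{a_1}$).

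Next, I would invoke Lemma~\ref{lem-m1}: conditionally on $W_{S_1}=(x_1,x_2)$ and on $W_{S_2}=(y_1,y_2)$ (equivalently on $B_S=(y_1-x_1,y_2-x_2)$), the law of $\xi$ has a continuous density in its argument. Since $Z$ is obtained from $\xi$ by adding the constant $x_2(y_1-x_1)$ (which is determined by the conditioning values), the conditional law of $Z$ is just a translate of the conditional law of $\xi$, hence also has a continuous density; call it $p((x_1,x_2),(y_1,y_2),z)$, manifestly nonnegative. Finally, I would note that one should be slightly careful about which path-segment actually carries information: the stopping time $S_0$ and the strictly nested geometry $a_1<a_2<a_3$ guarantee that $W_{S_1}\in\partial G_{a_1}$ with the starting point genuinely in the interior of $G_{a_2}$, so that the hypotheses of Lemma~\ref{lem-m1} are literally met; and the prior history of $W$ up to $S_1$ influences $Z$ only through the value $W_{S_1}$, by the Markov property, so conditioning on the full history or merely on $W_{S_1}$ gives the same conditional law of $Z$.

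The main obstacle, such as it is, is bookkeeping rather than anything deep: one must check carefully that $\phi$ really coincides with $x_2\,dx_1$ on the portion of the path between $S_1$ and $S_2$ — this rests on the fact that this excursion stays in $\overline{G_{a_2}}$ — and that the conditioning on $(W_{S_1},W_{S_2})$ together with the Markov property isolates precisely the quantity controlled by Lemma~\ref{lem-m1}. Once the affine relation $Z=x_2(y_1-x_1)+\xi$ is established under the conditioning, the continuity of the density is immediate from Lemma~\ref{lem-m1}, since translation preserves absolute continuity with continuous density.
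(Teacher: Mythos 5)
Your proof is correct and takes essentially the same route as the paper's one-line proof (strong Markov property at $S_1$ together with Lemma~\ref{lem-m1}), with the details usefully filled in. One small imprecision worth flagging: the quantity you call $\xi=\int_0^S B_u^2\circ dB_u^1$ is not literally the $\xi$ of Lemma~\ref{lem-m1} (which, writing the restarted Brownian motion as $W_s=(x_1,x_2)+B_s$, equals $x_2B_S^1+\int_0^S B_u^2\circ dB_u^1$) but a translate of it under the conditioning; this does not affect the conclusion, and indeed the detour through the increment process $B$ is unnecessary, since the strong Markov property already identifies $Z=\int_{S_1}^{S_2}W_s^2\circ dW_s^1$ as exactly Lemma~\ref{lem-m1}'s $\xi$ applied to the post-$S_1$ Brownian motion, with $W_{S_2}$ playing the role of $W_S$.
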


\begin{proof}
This follows from the Strong Markov property of $X$ and the previous Lemma.
\end{proof}

\begin{lemma}
\label{lem-key1}Under conditions and notations described above. \ Let $U$ be
an open subset such that $\overline{G_{a_{3}}}\cap U=\emptyset $ and $\tau
=\inf \{t>S_{0}:W_{t}\in \partial U\}$ be a hitting time. Let $T=S_{2}+\tau
\circ S_{2}$. Then the random variable $\eta =\int_{S_{0}}^{T}\phi (\circ
dW_{s})\neq 0$ almost surely on $\{S_{1}<T\}$.
\end{lemma}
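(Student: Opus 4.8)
The key point of Lemma~\ref{lem-key1} is that $\eta=\int_{S_{0}}^{T}\phi(\circ dW_{s})$ cannot be identically zero on the event $\{S_{1}<T\}$, even though $\phi$ is supported in $G_{a_{3}}$ and the Brownian motion may make excursions far away (out to $\partial U$) where $\phi$ contributes nothing. The plan is to condition on everything about the path outside the relevant window and reduce the question to the \emph{absolute continuity} of the contribution of a single clean passage through the annular region, which is exactly what the previous two lemmas provide.

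First I would observe that $\eta$ decomposes along the stopping times as
\begin{equation*}
\eta=\int_{S_{0}}^{S_{1}}\phi(\circ dW_{s})+\int_{S_{1}}^{S_{2}}\phi(\circ dW_{s})+\int_{S_{2}}^{T}\phi(\circ dW_{s})=:\eta_{0}+Z+\eta_{2},
\end{equation*}
where $Z$ is precisely the random variable studied in the preceding lemma. Then I would fix the event $\{S_{1}<T\}$ and condition on the $\sigma$-algebra $\mathcal{H}$ generated by the Brownian path up to time $S_{1}$ together with the post-$S_{2}$ path (equivalently, by $\mathcal{F}_{S_{1}}$, by $W_{S_{2}}$, and by the increments $(W_{S_{2}+u}-W_{S_{2}})_{u\geq0}$). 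Relative to $\mathcal{H}$, the quantities $\eta_{0}$ and $\eta_{2}$ are measurable constants (note $\eta_{2}=\int_{S_{2}}^{T}\phi(\circ dW)$ depends only on the post-$S_{2}$ path, since $\phi$'s coefficients and $T=S_{2}+\tau\circ S_{2}$ are determined by that path together with $W_{S_{2}}$). The only randomness left in $\eta$ given $\mathcal{H}$ is carried by the middle piece $Z$, which is the Stratonovich integral of $\phi$ over the bridge of $W$ from $W_{S_{1}}$ to $W_{S_{2}}$, and this middle segment is conditionally independent of the rest by the strong Markov property. By the previous lemma, the conditional law of $Z$ given $W_{S_{1}}$ and $W_{S_{2}}$ has a density $p$ on $\mathbb{R}$; hence, given $\mathcal{H}$, $\eta=Z+(\eta_{0}+\eta_{2})$ has an absolutely continuous conditional law, so $P\{\eta=0\mid\mathcal{H}\}=0$ on $\{S_{1}<T\}$. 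Taking expectations gives $P\{\eta=0,\ S_{1}<T\}=0$, which is the assertion.

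The main obstacle I anticipate is the careful bookkeeping of \emph{which} $\sigma$-algebra to condition on so that (i) the two ``outer'' contributions $\eta_{0},\eta_{2}$ become measurable, (ii) the middle contribution $Z$ retains its conditional density, and (iii) the conditioning event $\{S_{1}<T\}$ and the random terminal time $T$ are handled consistently. In particular one must check that $T$ is $\mathcal{H}$-measurable and that $\{S_{1}<T\}$, which on $\{S_{1}<S_{2}\}$ is automatic (since $T\geq S_{2}>S_{1}$), is compatible with the conditioning --- so effectively the event reduces to $\{S_{1}<S_{2}\}$, which is $\mathcal{F}_{S_{1}}$-measurable up to the a.s.\ finiteness of $S_{2}$. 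Once the decomposition is set up, the actual probabilistic input is entirely supplied by the strong Markov property together with the absolute-continuity statement of the preceding lemma, so no new estimates are needed; the proof is essentially a conditioning argument wrapped around that lemma.
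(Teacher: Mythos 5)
Your proposal is correct and follows essentially the same route as the paper: decompose $\eta$ into the pre-$S_{1}$, middle, and post-$S_{2}$ pieces, condition on $\mathcal{F}_{S_{1}}\vee \mathcal{F}_{>S_{2}}$ (your $\mathcal{H}$), and invoke the conditional density of $\int_{S_{1}}^{S_{2}}\phi(\circ dW)$ given $W_{S_{1}},W_{S_{2}}$ from the preceding lemma to conclude that $\eta$ has an absolutely continuous conditional law, hence $P\{\eta=0,\ S_{1}<T\}=0$. The paper merely carries out the same conditioning more explicitly via a factorization $F(z+y)=\sum_{j}H_{j}(z)K_{j}(y)$; the substance is identical.
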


\begin{proof}
Write%
\begin{equation*}
\eta =\int_{S_{1}}^{S_{2}}\phi (\circ dW_{s})+\int_{S_{0}}^{S_{1}}\phi
(\circ dW_{s})+\int_{S_{2}}^{T}\phi (\circ dW_{s})\text{.}
\end{equation*}%
For any stopping time $S$ we have two $\sigma $-fields, namely $\mathcal{F}%
_{S}$ which is the $\sigma $-algebra of events happening before $S$, and $%
\mathcal{F}_{>S}$ the $\sigma $-algebra of events depending on the path
after stopping time $S$. By definition, $1_{\{S_{1}<T\}}\int_{S_{0}}^{S_{1}}%
\phi (\circ dW_{s})$ is $\mathcal{F}_{S_{1}}$-measurable and $%
1_{\{S_{1}<T\}}\int_{S_{2}}^{T}\phi (\circ dW_{s})$ is $\mathcal{F}%
_{S_{1}}\vee \mathcal{F}_{>S_{2}}$ measurable. Let $Y=\int_{S_{0}}^{S_{1}}%
\phi (\circ dW_{s})+\int_{S_{2}}^{T}\phi (\circ dW_{s})$ for simplicity. By
the strong Markov property%
\begin{eqnarray*}
E\left\{ 1_{\{S_{1}<T\}}\int_{S_{1}}^{S_{2}}\phi (\circ dW_{s})|\mathcal{F}%
_{S_{1}}\vee \mathcal{F}_{>S_{2}}\right\} &=&1_{\{S_{1}<T\}}E\left\{
\int_{S_{1}}^{S_{2}}\phi (\circ dW_{s})|\mathcal{F}_{S_{1}}\vee \mathcal{F}%
_{>S_{2}}\right\} \\
&=&1_{\{S_{1}<T\}}E\left\{ \int_{S_{1}}^{S_{2}}\phi (\circ
dW_{s})|W_{S_{1}},W_{S_{2}}\right\}
\end{eqnarray*}%
so that%
\begin{equation*}
E\left\{ F(\eta )1_{\{S_{1}<T\}}|\mathcal{F}_{S_{1}}\vee \mathcal{F}%
_{>S_{2}}\right\} =1_{\{S_{1}<T\}}E\left\{ F\left( \int_{S_{1}}^{S_{2}}\phi
(\circ dW_{s})+Y\right) |\mathcal{F}_{S_{1}}\vee \mathcal{F}%
_{>S_{2}}\right\} \text{.}
\end{equation*}%
Suppose $F(z+y)=\sum_{j}H_{j}(z)K_{j}(y)$, then%
\begin{eqnarray*}
E\left\{ F(\eta )|\mathcal{F}_{S_{1}}\vee \mathcal{F}_{>S_{2}}\right\}
&=&\sum_{j}K_{j}(Y)E\left\{ H_{j}\left( \int_{S_{1}}^{S_{2}}\phi (\circ
dW_{s})\right) |\mathcal{F}_{S_{1}}\vee \mathcal{F}_{>S_{2}}\right\} \\
&=&\sum_{j}K_{j}(Y)E\left\{ H_{j}\left( \int_{S_{1}}^{S_{2}}\phi (\circ
dW_{s})\right) |W_{S_{1}},W_{S_{2}}\right\}
\end{eqnarray*}%
and therefore%
\begin{equation*}
E\left\{ F(\eta )1_{\{S_{1}<T\}}\right\} =\sum_{j}E\left\{ K_{j}(Y)E\left[
H_{j}\left( \int_{S_{1}}^{S_{2}}\phi (\circ dW_{s})\right)
|W_{S_{1}},W_{S_{2}}\right] 1_{\{S_{1}<T\}}\right\} \text{.}
\end{equation*}%
Since $\int_{S_{1}}^{S_{2}}\phi (\circ dW_{s})$ has a conditional
probability density $p(x,y,z)$%
\begin{equation*}
E\left[ 1_{\{S_{1}<T\}}\int_{S_{1}}^{S_{2}}\phi (\circ dW_{s})\in
dz|W_{S_{1}}=x,W_{S_{2}}=y\right] =p(x,y,z)dz
\end{equation*}%
and thus%
\begin{eqnarray*}
E\left\{ F(\eta )1_{\{S_{1}<T\}}\right\} &=&E\left\{ 1_{\{S_{1}<T\}}\int_{%
\mathbb{R}}\sum_{j}K_{j}(Y)H_{j}\left( z\right)
p(W_{S_{1}},W_{S_{2}},z)dz\right\} \\
&=&E\left\{ 1_{\{S_{1}<T\}}\int_{\mathbb{R}}F(Y+z)p(W_{S_{1}},W_{S_{2}},z)dz%
\right\} \text{.}
\end{eqnarray*}%
In particular $P\{\eta =0,S_{1}<T\}=0$.
\end{proof}

\section{Constructing approximations to Brownian paths}

In this section, we construct polygonal approximations to the planer
Brownian motion sample paths by tracing the sample paths of Brownian motion
through prescribed $\varepsilon $-grids laid out in the plane. Our
construction equally applies to higher dimensional Brownian motion with only
minor modifications which we will leave to the reader.

To make our arguments clear, let us work with the classical Wiener space $(%
\boldsymbol{W},\mathcal{B},P)$, where $\boldsymbol{W}$ is the space of all
continuous paths in $\mathbb{R}^{2}$ started at $0$, $\mathcal{B}$ is the
Borel $\sigma $-algebra on $\boldsymbol{W}$ and $P$ is the unique
probability so that the coordinate process $W=(W^{1},W^{2})$ is a planer
Brownian motion on $(\boldsymbol{W},\mathcal{B},P)$ started at $0$.

Let $\varepsilon \in (0,\frac{1}{4})$. Recall that $G$ is the planer domain
defined by (\ref{domaing}) which is the planer square with corners rounded.
For $\boldsymbol{z}=(z_{1},z_{2})\in \mathbb{Z}^{2}$ we assign three boxes $%
H_{\boldsymbol{z}}^{\varepsilon }\subset K_{\boldsymbol{z}}^{\varepsilon
}\subset Z_{\boldsymbol{z}}^{\varepsilon }$ which are all similar domains to 
$G$, with a common center $\varepsilon \boldsymbol{z}$ lies on the $%
\varepsilon $-lattice $\varepsilon \mathbb{Z}$: 
\begin{equation*}
H_{\boldsymbol{z}}^{\varepsilon }=\varepsilon \boldsymbol{z}+\varepsilon
(1-\varepsilon )G\text{,}
\end{equation*}%
\begin{equation*}
K_{\boldsymbol{z}}^{\varepsilon }=\varepsilon \boldsymbol{z}+\varepsilon
\left( 1-\varepsilon +\frac{\varepsilon \varphi (\varepsilon )}{2}\right) G%
\text{,}
\end{equation*}%
\begin{equation*}
Z_{\boldsymbol{z}}^{\varepsilon }=\varepsilon \boldsymbol{z}+\varepsilon
\left( 1-\varepsilon +\varepsilon \varphi (\varepsilon )\right) G\text{,\ }
\end{equation*}%
and%
\begin{equation*}
V_{\boldsymbol{z}}^{\varepsilon }=\varepsilon \boldsymbol{z}+\varepsilon G
\end{equation*}%
where $\varphi (\varepsilon )\ll \varepsilon ^{\alpha }$ (with $\alpha \geq
10$) but to be chosen late on.

Let us notice that the gap between $Z_{\boldsymbol{z}}^{\varepsilon }$ and
the box $V_{\boldsymbol{z}}^{\varepsilon }$ has a magnitude $\varepsilon
^{2}(1-\varphi (\varepsilon ))$, while the magnitude of the gap between $H_{%
\boldsymbol{z}}^{\varepsilon }$ and $K_{\boldsymbol{z}}^{\varepsilon }$ is $%
\frac{1}{2}\varepsilon ^{2}\varphi (\varepsilon )$. Since $\varphi
(\varepsilon )\ll \varepsilon ^{\alpha }$ so that 
\begin{equation*}
\varepsilon ^{2}(1-\varphi (\varepsilon ))\gg \frac{1}{2}\varepsilon
^{2}\varphi (\varepsilon )
\end{equation*}%
a crucial fact we will use below.

If $A\subset \mathbb{R}^{2}$, then $T_{A}$ denotes the hitting time of $A$
by the Brownian motion $W$.

\begin{lemma}
\label{lemc1}There is $\varphi (\varepsilon )\ll \varepsilon ^{\alpha }$
(with $\alpha \geq 11$) and $\beta \gg 10$ such that for every $\boldsymbol{z%
}=(z_{1},z_{2})\in \mathbb{Z}^{2}$ and $x\in \partial Z_{\boldsymbol{z}%
}^{\varepsilon }$ 
\begin{equation}
P\{T_{\partial V_{\boldsymbol{z}}^{\varepsilon }}<T_{H_{\boldsymbol{z}%
}^{\varepsilon }}|W_{0}=x\}\leq \varepsilon ^{10}\text{.}  \label{cap-02}
\end{equation}
\end{lemma}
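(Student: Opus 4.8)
The plan is to reduce, by translation and by Brownian scaling, to a single local comparison of the Brownian motion with its behaviour in a circular annulus, for which the escape probability is given by the explicit (logarithmic) harmonic measure.

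First I would remove the index $\boldsymbol z$. The three domains $H^{\varepsilon}_{\boldsymbol z}\subset Z^{\varepsilon}_{\boldsymbol z}\subset V^{\varepsilon}_{\boldsymbol z}$ are the translates by $\varepsilon\boldsymbol z$ of the fixed triple of concentric dilates
\begin{equation*}
H:=\varepsilon(1-\varepsilon)G\ \subset\ Z:=\varepsilon\bigl(1-\varepsilon+\varepsilon\varphi(\varepsilon)\bigr)G\ \subset\ V:=\varepsilon G,
\end{equation*}
and planar Brownian motion is translation invariant, so it suffices to bound $P_x\{T_{\partial V}<T_H\}$ for $x\in\partial Z$. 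Next I would record three elementary facts, all uniform in $x\in\partial Z$ and obtained by comparing concentric dilates of the convex body $G$: (i) $\delta:=d(x,\partial H)$ is comparable to $\varepsilon^2\varphi(\varepsilon)$, because $\partial Z$ and $\partial H$ are the boundaries of the dilates $\lambda G\supset\mu G$ with $\lambda-\mu=\varepsilon^2\varphi(\varepsilon)$; (ii) $d(x,\partial V)\ge\tfrac14\varepsilon^2$, since the gap between $\partial Z$ and $\partial V$ has magnitude $\varepsilon^2(1-\varphi(\varepsilon))$; and (iii) $H$ satisfies an interior ball condition with radius $\rho_0:=c\,\varepsilon^{\beta+1}$, i.e. at each point of $\partial H$ there is an interior tangent ball of that radius — here one uses that $G$ is a convex rounded square with $C^{1,1}$ boundary whose curvature radius is everywhere at least the rounding radius $\varepsilon^{\beta}$, the bottleneck being the four rounded corners of $H$, where the curvature radius equals the rounding radius of $H$, namely $\varepsilon(1-\varepsilon)\varepsilon^{\beta}$.

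The core step is the comparison. Let $p\in\partial H$ be the point nearest $x$, let $n$ be the outer unit normal to $H$ at $p$, and let $B=B(q,\rho_0)\subset H$ be the interior tangent ball at $p$, so that $q=p-\rho_0 n$, $x=p+\delta n$, and $|x-q|=\rho_0+\delta$. Since $B\subset H$ one has $T_H\le T_B$, hence $\{T_{\partial V}<T_H\}\subset\{T_{\partial V}<T_B\}$. To reach $\partial V$ the path must leave $B(q,\tfrac18\varepsilon^2)$, because $d(q,\partial V)\ge d(x,\partial V)-|x-q|\ge\tfrac18\varepsilon^2$ for $\varepsilon$ small (using $\rho_0,\delta\ll\varepsilon^2$), whereas hitting $B$ is hitting the circle $\partial B(q,\rho_0)$. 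Therefore $P_x\{T_{\partial V}<T_B\}$ is at most the probability that planar Brownian motion started at distance $\rho_0+\delta$ from $q$ reaches distance $\tfrac18\varepsilon^2$ before distance $\rho_0$ from $q$, which by the logarithmic harmonic measure of the planar annulus equals
\begin{equation*}
\frac{\log(1+\delta/\rho_0)}{\log\bigl(\varepsilon^{2}/(8\rho_0)\bigr)}\;\le\;\frac{\delta/\rho_0}{\log\bigl(\varepsilon^{2}/(8\rho_0)\bigr)}.
\end{equation*}

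Finally I would substitute $\delta\asymp\varepsilon^2\varphi(\varepsilon)$ and $\rho_0\asymp\varepsilon^{\beta+1}$: the bound becomes of order $\varepsilon^{\,1-\beta}\varphi(\varepsilon)\,/\,\bigl((\beta-1)\log(1/\varepsilon)\bigr)$, so it suffices to fix the still‑free parameters with $\beta$ large (in particular $\beta>1$, so that $\rho_0\ll\varepsilon^2$) and then $\varphi(\varepsilon)\ll\varepsilon^{\beta+9}$ — i.e. to take the exponent $\alpha$ in $\varphi(\varepsilon)\ll\varepsilon^{\alpha}$ somewhat larger than $\beta$, which is consistent with the ranges $\alpha\ge11$, $\beta\gg10$ quoted — so that the bound is $\le\varepsilon^{10}$ for all small $\varepsilon$, uniformly in $x\in\partial Z$ and hence in $\boldsymbol z$. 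The point requiring real care is fact (iii) at the corners: it is the rounded corners, with curvature radius only $\asymp\varepsilon^{\beta+1}$, rather than the flat edges of $H$ (where already a one‑dimensional crossing estimate gives the conclusion under the far milder requirement $\varphi(\varepsilon)\ll\varepsilon^{10}$), that force $\varphi$ to be this small relative to $\beta$; once the uniform interior ball radius is pinned down, the rest is a routine planar Brownian motion computation.
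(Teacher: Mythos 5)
Your argument is correct, and it is genuinely different from the one in the paper. The paper identifies $P_x\{T_{\partial V}<T_H\}$ with the harmonic function $u$ on $V\setminus H$ that is $0$ on $\partial H$ and $1$ on $\partial V$, then invokes the qualitative continuity of $u$ with respect to $d(x,\partial H)$ to say one may choose $\varphi(\varepsilon)$ small enough, and finally computes a circular--annulus model to indicate the magnitude, with the extension to the rounded square deferred to ``a proper conformal transformation.'' Your proof replaces this by a hard, self-contained comparison: enclose $H$ from inside by the uniform interior tangent ball of radius $\rho_0\asymp\varepsilon^{\beta+1}$ (the curvature radius of the rounded corner), dominate $\{T_{\partial V}<T_H\}$ by $\{T_{\partial V}<T_B\}$, and then use only the explicit annulus escape probability $\log(r/r_1)/\log(r_2/r_1)$. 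This buys rigor and explicitness --- in particular you correctly isolate the rounded corners as the bottleneck, which is precisely what the $\varepsilon^\beta$-rounding is there to control, and you obtain a concrete sufficient condition $\varphi(\varepsilon)\ll\varepsilon^{\beta+9}$ rather than a soft existence claim --- at the price of a cruder estimate: the interior-ball comparison yields a bound of order $\varepsilon^{1-\beta}\varphi(\varepsilon)/\log(1/\varepsilon)$ instead of the paper's heuristic $\varphi(\varepsilon)/\varepsilon$, so your $\alpha$ must grow with $\beta$. Since the lemma only asserts the existence of a suitable $\varphi$ with $\varphi(\varepsilon)\ll\varepsilon^\alpha$, $\alpha\geq 11$, and $\beta\gg 10$, this looser constraint is still entirely within the statement, and the conclusion follows uniformly in $x\in\partial Z^\varepsilon_{\boldsymbol z}$ and, by translation invariance, in $\boldsymbol z$.

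One small point worth recording explicitly in a write-up: the uniform interior-ball radius for a bounded convex $C^{1,1}$ domain is the infimum of the boundary curvature radius only up to the constraint that the ball must also fit inside the domain; here that is automatic because $\rho_0\asymp\varepsilon^{\beta+1}$ is much smaller than the inradius of $H$ (which is $\asymp\varepsilon$), so the tangent ball of radius $\rho_0$ does indeed sit inside $H$ at every boundary point, including on the straight edges near the corners. With that noted, the argument is complete.
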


\begin{proof}
We need to show that the probability on the left-hand side is dominated by
the ratio of the distances between $x$ to $\partial H_{\boldsymbol{z}%
}^{\varepsilon }$ and to $\partial V_{\boldsymbol{z}}^{\varepsilon }$ which
is $\frac{\varphi (\varepsilon )}{1-\varphi (\varepsilon )}$, which in turn
yields the bound in (\ref{cap-02}) as $\varepsilon <\frac{1}{4}$ by
increasing $\alpha $ to kill any possible constant appearing in the
domination. This is standard for one dimensional Brownian motion. Similar
estimates may be obtained by means of potential theory. Clearly the
left-hand side of (\ref{cap-02}) does not depend on $\boldsymbol{z}\in 
\mathbb{Z}^{2}$ so let us assume $\boldsymbol{z}=0$. Let $u$ be the unique
harmonic function on $V_{\boldsymbol{z}}^{\varepsilon }\setminus H_{%
\boldsymbol{z}}^{\varepsilon }$ such that $u=1$ on $\partial V_{\boldsymbol{z%
}}^{\varepsilon }$ and $u=0$ on $\partial H_{\boldsymbol{z}}^{\varepsilon }$%
. Then, $u(W_{t\wedge T_{\partial V_{\boldsymbol{z}}^{\varepsilon }}\wedge
T_{H_{\boldsymbol{z}}^{\varepsilon }}})$ is a bounded \ martingale, so that 
\begin{equation*}
u(x)=P\{T_{\partial V_{\boldsymbol{z}}^{\varepsilon }}<T_{H_{\boldsymbol{z}%
}^{\varepsilon }}|W_{0}=x\}\text{.}
\end{equation*}%
By the uniform continuity of the potential $u$ with respect to the distance
of $x$ to the interior boundary $\partial H_{\boldsymbol{z}}^{\varepsilon }$
(for example see sections 4-2 in Port and\ Stone \cite{MR0492329}), we may
chose $\varphi (\varepsilon )$ small enough so that $x$ is closer to $%
\partial H_{\boldsymbol{z}}^{\varepsilon }$ than to $\partial V_{\boldsymbol{%
z}}^{\varepsilon }$, to ensure that $u(x)\leq \varepsilon ^{10}$ as long as $%
x\in \partial Z_{\boldsymbol{z}}^{\varepsilon }$.

To see the magnitude, we can consider the harmonic function on the disk
centered at $0$ with radius $\varepsilon $%
\begin{equation*}
w(x_{1},x_{2})=\frac{1}{\log \left( 1+2\varepsilon -\varepsilon ^{2}\right) }%
\log \left( \frac{x_{1}^{2}+x_{2}^{2}}{\varepsilon ^{2}}+2\varepsilon
-\varepsilon ^{2}\right)
\end{equation*}%
which vanishes on $\rho \equiv \sqrt{x_{1}^{2}+x_{2}^{2}}=\varepsilon
(1-\varepsilon )$ and is $1$ on $\rho =\varepsilon $. At $\rho =\varepsilon
(1-\varepsilon )+\varepsilon \varphi (\varepsilon )$%
\begin{eqnarray*}
w(x_{1},x_{2}) &=&\frac{1}{\log \left( 1+2\varepsilon -\varepsilon
^{2}\right) }\log \left( 1+2\varphi (\varepsilon )+\varphi (\varepsilon
)^{2}-2\varepsilon \varphi (\varepsilon )\right) \\
&\leq &C\frac{\varphi (\varepsilon )}{\varepsilon }\text{.}
\end{eqnarray*}%
Similar estimates hold for our rounded squares. In dimension $2$, this can
be done by a proper comformal transformation.
\end{proof}

In what follows we choose such $\varphi $ and $\beta $ so that (\ref{cap-02}%
) holds for small $\varepsilon \in (0,1/4)$.

For each path $w\in \boldsymbol{W}$, define a sequence $\{\tau
_{k}(w):k=0,1,2,\cdots \}$ of stopping times which trace the crossings of
the path $w$ through the $\varepsilon $-grid lattice $\varepsilon \mathbb{Z}%
^{2}$. Let $\tau _{0}(w)=0$ and $\boldsymbol{n}_{0}(w)=(0,0)$, and define $%
\tau _{k}(w)$ and $\boldsymbol{n}_{k}(w)$ inductively by 
\begin{equation*}
\tau _{k}(w)=\inf \text{ }\left\{ t>\tau _{k-1}(w):w_{t}\in \dbigcup\limits_{%
\boldsymbol{z}\neq \boldsymbol{n}_{k-1}(w)}H_{\boldsymbol{z}}^{\varepsilon
}\right\}
\end{equation*}%
and $\boldsymbol{n}_{k}(w)\in \mathbb{Z}^{2}$ such that $w(\tau _{k}(w))\in
H_{\boldsymbol{n}_{k}(w)}^{\varepsilon }$ if $\tau _{k}(w)<\infty $, and $%
\boldsymbol{n}_{k+1}(w)=\boldsymbol{n}_{k}(w)$ if $\tau _{k}(w)=\infty $.
Then $\{\tau _{k}:k=0,1,\cdots \}$ is a strictly increasing sequence of
stopping times, and $\tau _{k}\uparrow \infty $ almost surely as $k\uparrow
\infty $.

Let us use $\{\zeta _{k}:k=0,1,\cdots \}$ and $\{\boldsymbol{m}%
_{k}:k=0,1,\cdots \}$ to denote the corresponding sequences obtained in the
previous definition with box $H_{\boldsymbol{z}}^{\varepsilon }$ replaced by 
$Z_{\boldsymbol{z}}^{\varepsilon }$. In other words%
\begin{equation*}
\zeta _{k}(w)=\inf \text{ }\left\{ t>\zeta _{k-1}(w):w_{t}\in
\dbigcup\limits_{\boldsymbol{z}\neq \boldsymbol{m}_{k-1}(w)}Z_{\boldsymbol{z}%
}^{\varepsilon }\right\}
\end{equation*}%
etc.

Let $M_{H}(w)=\inf \{k:\tau _{k+1}(w)>1\}$ and $M_{Z}(w)=\inf \{k:\zeta
_{k+1}(w)>1\}$. Then both $M_{H}<\infty $ and $M_{Z}<\infty $ almost surely.
Since a path which hits the box $H_{\boldsymbol{z}}^{\varepsilon }$ must
first hit the larger one $Z_{\boldsymbol{z}}^{\varepsilon }$ so that $\zeta
_{k}\leq \tau _{k}$ for any $k$, and therefore $M_{H}\leq M_{Z}$. The last
inequality says a continuos path at least hit as many larger boxes than
smaller ones.

Let us construct $w(\varepsilon )$ to be the polygon assuming the point $%
\boldsymbol{n}_{k}\varepsilon $ at time $\tau _{k}$, that is,%
\begin{equation*}
w(\varepsilon )_{t}=\varepsilon \boldsymbol{n}_{k-1}(w)+\frac{t-\tau
_{k-1}(w)}{\tau _{k}(w)-\tau _{k-1}(w)}\varepsilon \boldsymbol{n}_{k+1}(w)%
\text{ \ \ \ if }t\in \lbrack \tau _{k-1}(w),\tau _{k}(w)]
\end{equation*}%
for $l=0,1,\cdots $. We show that $w(\varepsilon )$ converges to the
Brownian curves almost surely as $\varepsilon \downarrow 0$.

\begin{lemma}
\label{lemc2}Let $W=(W_{t})_{t\geq 0}$ be a planer Brownian motion started
at some point inside the box $H_{\boldsymbol{0}}^{\varepsilon }$, and 
\begin{equation*}
\tau =\inf \left\{ t>0:W_{t}\in \dbigcup\limits_{\boldsymbol{z}\neq 
\boldsymbol{0}}H_{\boldsymbol{z}}^{\varepsilon }\right\} \text{ .}
\end{equation*}%
Then%
\begin{equation*}
P\left\{ \sup_{0\leq t\leq \tau }|W_{t}|>3\sqrt{2}\varepsilon \right\} \leq
\left( \frac{1}{3}\right) ^{\left[ \frac{1}{2\varepsilon }\right] }\text{ .}
\end{equation*}
\end{lemma}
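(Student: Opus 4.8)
The plan is to estimate the probability that the Brownian motion travels a macroscopic distance (of order $3\sqrt2\,\varepsilon$ from the origin, which is several grid cells away) before being captured by some box $H_{\boldsymbol z}^{\varepsilon}$ with $\boldsymbol z\neq\boldsymbol0$. The key point is that the boxes $H_{\boldsymbol z}^{\varepsilon}$ occupy a substantial fraction of each $\varepsilon$-cell: $H_{\boldsymbol z}^{\varepsilon}=\varepsilon\boldsymbol z+\varepsilon(1-\varepsilon)G$, so it is a rounded square of side roughly $\varepsilon(1-\varepsilon)$ centered in the $\varepsilon\times\varepsilon$ cell centered at $\varepsilon\boldsymbol z$. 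Hence the union $\bigcup_{\boldsymbol z}H_{\boldsymbol z}^{\varepsilon}$ is ``thick'': any straight corridor of length $\varepsilon$ in the plane must pass through one of these boxes, and the gaps between adjacent boxes have width only $O(\varepsilon^2)$.

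First I would set up a comparison with a one-dimensional random walk on a coarsened scale. Consider the discrete sequence of times at which $W$ first reaches distance $k\varepsilon/2$ (say, in sup-norm, or along a fixed coordinate direction) from the starting cell, for $k=1,2,\dots$. At each such scale crossing, by the strong Markov property and a scaling/coupling argument, there is a probability bounded below by some absolute constant $c>0$ that before advancing by another $\varepsilon/2$ the path enters one of the boxes $H_{\boldsymbol z}^{\varepsilon}$ with $\boldsymbol z\neq\boldsymbol0$ — indeed, to move a net distance $\varepsilon/2$ in a given direction the path sweeps across a full neighbouring cell, and the chance of threading through the $O(\varepsilon^2)$-wide gaps around that cell's box $H_{\boldsymbol z}^{\varepsilon}$ without touching it is bounded away from $1$. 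Thus the number of such ``scale steps'' before absorption is stochastically dominated by a geometric random variable, and reaching $\sup_{t\le\tau}|W_t|>3\sqrt2\,\varepsilon$ requires surviving at least of order $[1/(2\varepsilon)]$ independent trials each succeeding with probability $\le 2/3$, giving the bound $(1/3)^{[1/(2\varepsilon)]}$ after adjusting the constant.

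The cleanest implementation is probably to fix a direction, say the first coordinate, and to note that $\{\sup_{0\le t\le\tau}|W_t|>3\sqrt2\,\varepsilon\}\subset\{\tau>T_{R}\}$ where $R$ is a large ball and $T_R$ its hitting time; then decompose $[0,T_R]$ into the successive passages of $W^1$ across half-cell-width strips, and on each such passage lower-bound (uniformly in the entry point, by scaling) the conditional probability of hitting $H_{\boldsymbol z}^{\varepsilon}$ for the $\boldsymbol z$ corresponding to the cell just entered, using a one-dimensional estimate for the transverse coordinate to ensure the path is well inside that cell when it crosses. Multiplying the complementary probabilities over the $\ge[1/(2\varepsilon)]$ strips contained in the ball of radius $3\sqrt2\,\varepsilon$ yields the claimed geometric decay.

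The main obstacle will be making the uniform lower bound on the per-step capture probability genuinely uniform in the entry configuration — the Brownian motion may enter a cell near a corner of the box $H_{\boldsymbol z}^{\varepsilon}$, or near a gap, so one must argue (via scaling to a fixed reference picture and a compactness/continuity argument for hitting probabilities, as in Port–Stone \cite{MR0492329}) that regardless of where and how the path enters a neighbouring $\varepsilon$-cell, the probability it touches the box before leaving that cell on the far side is bounded below by a universal constant. Once that constant (here effectively $1/3$) is pinned down, the rest is a routine product estimate over the $\sim[1/(2\varepsilon)]$ layers.
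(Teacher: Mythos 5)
There is a genuine gap in your accounting, and it is quantitative rather than cosmetic. You claim that reaching $\sup_{t\le\tau}|W_t|>3\sqrt2\,\varepsilon$ requires surviving ``at least of order $[1/(2\varepsilon)]$'' independent trials, where each trial is a passage of scale $\varepsilon/2$. But with step size $\varepsilon/2$, the number of such passages needed to reach distance $3\sqrt2\,\varepsilon$ is $\lceil 6\sqrt2\rceil\approx 9$, a constant independent of $\varepsilon$. Combined with a per-passage escape probability merely ``bounded away from $1$'' by a fixed constant, as you assert, your scheme yields a bound of the form $c^{9}$ with $c<1$ fixed --- a bound that does not decay as $\varepsilon\downarrow 0$ and cannot reproduce $(1/3)^{[1/(2\varepsilon)]}$. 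You identify the correct physical obstruction (the path must thread $O(\varepsilon^2)$-wide gaps), but you both underestimate how unlikely each threading is and overcount how many threadings are required, and the two errors are being used to conjure the right exponent.

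The exponent $[1/(2\varepsilon)]$ does not count macroscopic $\varepsilon$-scale steps; it counts $2\varepsilon^2$-long blocks inside a \emph{single} gap corridor. The paper's argument is essentially one line: if $W_\tau$ lands in a box $H_{\boldsymbol z}^\varepsilon$ other than a nearest neighbour of the origin, or if $W$ exits $[-3\varepsilon,3\varepsilon]\times[-3\varepsilon,3\varepsilon]$ before hitting any $H_{\boldsymbol z}^\varepsilon$ with $\boldsymbol z\ne\boldsymbol 0$, then $W$ must traverse lengthwise a strip of width $\varepsilon^2$ and length $\varepsilon-2\varepsilon^\beta$ --- the gap between two adjacent boxes. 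Slicing that strip into $[1/(2\varepsilon)]$ consecutive sub-blocks of length $2\varepsilon^2$, the probability of crossing each sub-block longitudinally without touching its long sides is at most $1/3$, and the product over sub-blocks gives $(1/3)^{[1/(2\varepsilon)]}$. In your language: the per-passage escape probability is itself of order $(1/3)^{1/(2\varepsilon)}$, so a single passage already suffices; the corridor-threading estimate is the ingredient your proof needs to make explicit and quantify, rather than the coarse geometric-trial count you wrote down.
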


\begin{proof}
Let $\boldsymbol{z}=(z_{1},z_{2})\in \mathbb{Z}^{2}$ be the random variable
such that $W_{\tau }\in H_{\boldsymbol{z}}^{\varepsilon }$. If 
\begin{equation*}
\boldsymbol{z}\neq (\pm 1,\pm 1),(\pm 1,0)\text{ or }(0,\pm 1)
\end{equation*}%
or the Brownian motion $W$ runs out off the square $[-3\varepsilon
,3\varepsilon ]\times \lbrack -3\varepsilon ,3\varepsilon ]$, then $W$ must
travel through a narrow strip of wideness $\varepsilon ^{2}$ and length $%
\varepsilon -2\varepsilon ^{\beta }$, so that the probability%
\begin{equation*}
P\left\{ \boldsymbol{z}\neq (\pm 1,\pm 1),(\pm 1,0)\text{ or }(0,\pm
1)\right\} \leq \left( \frac{1}{3}\right) ^{\left[ \frac{1}{2\varepsilon }%
\right] }\text{.}
\end{equation*}%
Therefore%
\begin{equation*}
P\left\{ \sup_{0\leq t\leq \tau }|W_{t}|>3\sqrt{2}\varepsilon \right\} \leq
\left( \frac{1}{3}\right) ^{\left[ \frac{1}{2\varepsilon }\right] }\text{ .}
\end{equation*}
\end{proof}

\begin{lemma}
There is a sequence $\varepsilon _{n}\downarrow 0$, such that%
\begin{equation*}
P\left\{ w:\lim_{n\rightarrow \infty }\inf_{\sigma }\sup_{0\leq t\leq
1}|w_{t}-w(\varepsilon _{n})_{\sigma (t)}|=0\right\} =1
\end{equation*}%
where $\inf_{\sigma }$ takes over all possible parametrization.
\end{lemma}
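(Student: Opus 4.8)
The plan is to prove the stronger statement that, for each fixed $\varepsilon\in(0,1/4)$, the polygon $w(\varepsilon)$ is — off an event of probability at most $(1/3)^{[1/(2\varepsilon)]}\big(E[M_{H}]+1\big)$ — within a universal multiple of $\varepsilon$ of $w$ in the uniform norm on $[0,1]$, and then to choose $\varepsilon_{n}\downarrow0$ (for instance $\varepsilon_{n}=1/n$) along which these probabilities are summable and apply Borel--Cantelli. Fix $\varepsilon$; recall the crossing times $\tau_{k}$ and indices $\boldsymbol{n}_{k}$, put $M=M_{H}(w)$ so that $\tau_{M}\leq1<\tau_{M+1}$, and for $k=1,2,\dots$ let $G_{k}$ be the event $\{\sup_{\tau_{k-1}\leq t\leq\tau_{k}}|W_{t}-W_{\tau_{k-1}}|\leq3\sqrt{2}\varepsilon\}$. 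The geometric point is that on $G_{k}$, since $W_{\tau_{k-1}}\in H_{\boldsymbol{n}_{k-1}}^{\varepsilon}$ and $H_{\boldsymbol{n}_{k-1}}^{\varepsilon}$ has diameter of order $\varepsilon$, the whole arc $\{W_{t}:\tau_{k-1}\leq t\leq\tau_{k}\}$ lies within $C\varepsilon$ of $\varepsilon\boldsymbol{n}_{k-1}$, and also
\begin{equation*}
|\varepsilon\boldsymbol{n}_{k}-\varepsilon\boldsymbol{n}_{k-1}|\leq|\varepsilon\boldsymbol{n}_{k}-W_{\tau_{k}}|+|W_{\tau_{k}}-W_{\tau_{k-1}}|+|W_{\tau_{k-1}}-\varepsilon\boldsymbol{n}_{k-1}|\leq C\varepsilon ,
\end{equation*}
so the affine piece of $w(\varepsilon)$ on $[\tau_{k-1},\tau_{k}]$, which runs from $\varepsilon\boldsymbol{n}_{k-1}$ to $\varepsilon\boldsymbol{n}_{k}$, also stays within $C\varepsilon$ of $\varepsilon\boldsymbol{n}_{k-1}$. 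Hence on the event that $G_{k}$ holds for every $k$ with $\tau_{k-1}\leq1$ (equivalently, for $k=1,\dots,M+1$) one has $\sup_{0\leq t\leq1}|w_{t}-w(\varepsilon)_{t}|\leq C\varepsilon$, and a fortiori $\inf_{\sigma}\sup_{0\leq t\leq1}|w_{t}-w(\varepsilon)_{\sigma(t)}|\leq C\varepsilon$, the identity parametrisation being admissible.

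Next I would estimate the probability of the complementary event. By the strong Markov property at the stopping time $\tau_{k-1}$ and Lemma \ref{lemc2} (applied after translating $\boldsymbol{n}_{k-1}$ to the origin, and using $W_{\tau_{k-1}}\in H_{\boldsymbol{n}_{k-1}}^{\varepsilon}$), one gets $P(G_{k}^{c}\mid\mathcal{F}_{\tau_{k-1}})\leq(1/3)^{[1/(2\varepsilon)]}$, uniformly in the starting point. Since $\{\tau_{k-1}\leq1\}\in\mathcal{F}_{\tau_{k-1}}$ and only excursions with $\tau_{k-1}\leq1$ matter,
\begin{equation*}
P\Big(\bigcup_{k\geq1}\big(G_{k}^{c}\cap\{\tau_{k-1}\leq1\}\big)\Big)\leq\sum_{k\geq1}(1/3)^{[1/(2\varepsilon)]}P(\tau_{k-1}\leq1)=(1/3)^{[1/(2\varepsilon)]}\big(E[M_{H}]+1\big),
\end{equation*}
using $\sum_{k\geq0}P(\tau_{k}\leq1)=\sum_{k\geq0}P(M_{H}\geq k)=E[M_{H}]+1$. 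It remains to see that $E[M_{H}]$ grows at most polynomially in $1/\varepsilon$: between $\tau_{k-1}$ and $\tau_{k}$ the path must leave $H_{\boldsymbol{n}_{k-1}}^{\varepsilon}$ and reach a different box $H_{\boldsymbol{z}}^{\varepsilon}$, and any two distinct boxes $H_{\boldsymbol{z}}^{\varepsilon}$ are separated by a gap of order $\varepsilon^{2}$, so the path must exit the disc of radius $c\varepsilon^{2}$ about $W_{\tau_{k-1}}$, whence $E[\tau_{k}-\tau_{k-1}\mid\mathcal{F}_{\tau_{k-1}}]\geq\delta:=c^{2}\varepsilon^{4}/2$. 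Thus $k\mapsto\tau_{k}-k\delta$ is a submartingale for $(\mathcal{F}_{\tau_{k}})$; optional stopping at the bounded time $M_{H}\wedge m$, together with $\tau_{M_{H}\wedge m}\leq\tau_{M_{H}}\leq1$, gives $1\geq\delta\,E[M_{H}\wedge m]$, and letting $m\to\infty$ yields $E[M_{H}]\leq2/(c^{2}\varepsilon^{4})$.

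Putting the pieces together, the probability of the bad event is $O\big(\varepsilon^{-4}(1/3)^{[1/(2\varepsilon)]}\big)$, which along $\varepsilon_{n}=1/n$ is summable in $n$. By Borel--Cantelli, for $P$-a.e.\ $w$ the bad event fails for all large $n$, so $\inf_{\sigma}\sup_{0\leq t\leq1}|w_{t}-w(\varepsilon_{n})_{\sigma(t)}|\leq C\varepsilon_{n}\to0$, which is the assertion.

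I expect the only genuinely delicate step to be the passage from the single-excursion bound of Lemma \ref{lemc2} to a bound valid simultaneously over all excursions completed before time $1$: because the number of such excursions is random one cannot simply sum, and the crossing-count estimate $E[M_{H}]=O(\varepsilon^{-4})$ is precisely what makes the union bound go through — it suffices because the smallness $(1/3)^{[1/(2\varepsilon)]}$ is exponential in $1/\varepsilon$ and hence overwhelms any polynomial growth of $E[M_{H}]$. The remaining ingredients — tracking the $O(\varepsilon)$-sized boxes, handling the last, possibly incomplete, excursion on $[\tau_{M},1]$, and the harmless $\inf_{\sigma}$ — are routine.
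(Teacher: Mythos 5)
Your overall scheme — bound the deviation excursion‐by‐excursion via Lemma \ref{lemc2}, control the number of excursions $M_{H}$ completed by time $1$, take a union bound, and finish with Borel--Cantelli along a summable $\varepsilon_{n}$ — is exactly the paper's, and your geometric reduction from $|w_{t}-w(\varepsilon)_{t}|$ to $|w_{t}-\varepsilon\boldsymbol{n}_{k-1}|$ and $|\varepsilon\boldsymbol{n}_{k}-\varepsilon\boldsymbol{n}_{k-1}|$ is correct. The one place you diverge is how the crossing count is controlled, and there is a genuine technical flaw there: $M_{H}=\inf\{k:\tau_{k+1}>1\}$ is \emph{not} a stopping time for the discrete filtration $(\mathcal{F}_{\tau_{k}})_{k\geq0}$, since $\{M_{H}\leq k\}=\{\tau_{k+1}>1\}$ depends on the path strictly after $\tau_{k}$; so "optional stopping at $M_{H}\wedge m$" is not licensed. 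The natural replacement $N:=M_{H}+1=\inf\{k:\tau_{k}>1\}$ \emph{is} a stopping time, but then $\tau_{N\wedge m}$ is no longer bounded by $1$ (the last excursion overshoots), so the chain $1\geq E[\tau_{N\wedge m}]\geq\delta E[N\wedge m]$ breaks.

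The conclusion $E[M_{H}]=O(\varepsilon^{-4})$ is nonetheless correct and the repair is short. Either reproduce the paper's route — on $\{M_{H}\geq k\}$ one has $\tau_{k}\leq1$, hence at least one of the $k$ excursions has length $\leq1/k$, and a single excursion of length $\leq1/k$ forces a displacement $\geq\varepsilon^{2}$ in that time, giving $P\{M_{H}\geq k\}\leq k\,e^{-\varepsilon^{4}k/2}$ up to constants; summing or splitting at $k\approx\varepsilon^{-6}$ then finishes. Or, staying closer to your argument, drop optional stopping and cap the excursion lengths: since $\{\tau_{k-1}\leq1\}\in\mathcal{F}_{\tau_{k-1}}$ and $E[(\tau_{k}-\tau_{k-1})\wedge1\mid\mathcal{F}_{\tau_{k-1}}]\geq\delta'$ with $\delta'$ of order $\varepsilon^{4}$ (the cap changes nothing to leading order for small $\varepsilon$), the tower property gives
\begin{equation*}
\delta'\sum_{k\geq1}P(\tau_{k-1}\leq1)\leq E\Big[\sum_{k\geq1}\big((\tau_{k}-\tau_{k-1})\wedge1\big)1_{\{\tau_{k-1}\leq1\}}\Big]\leq E[\tau_{M_{H}}]+1\leq2,
\end{equation*}
so $E[M_{H}]+1\leq2/\delta'$. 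Once this is patched, your proof goes through and is an honest variant of the paper's: the paper gets an exponential tail for $M_{H}$ and then optimizes a threshold $k\approx\varepsilon^{-6}$, whereas you get an $O(\varepsilon^{-4})$ bound on $E[M_{H}]$ and sum the union bound weighted by $P(\tau_{k-1}\leq1)$; both exploit that the single‐excursion failure probability $(1/3)^{[1/(2\varepsilon)]}$ beats any polynomial in $1/\varepsilon$.
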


\begin{proof}
We need to estimate the numbers of the crossings between different $H_{%
\boldsymbol{z}}^{\varepsilon }$ during the time $0$ to $1$. Note that%
\begin{eqnarray*}
P\left\{ M_{H}\geq k\text{ }\right\} &\leq &P\left\{ \text{at least for one }%
l\text{, }\tau _{l+1}-\tau _{l}\leq \frac{1}{k}\text{ }\right\} \\
&\leq &P\left\{ \sup_{0<t\leq \frac{1}{k}}|w_{t}|\geq 2\varepsilon ^{2}\text{
}\right\} \leq \mathbb{P}\left\{ \sup_{0<t\leq \frac{1}{k}}|w_{t}^{1}|\geq
\varepsilon ^{2}\text{ }\right\} \\
&\leq &\exp \left( -\frac{\varepsilon ^{4}}{2}k\right) \text{.}
\end{eqnarray*}%
Therefore%
\begin{eqnarray*}
&&P\left\{ \sup_{l}\sup_{\tau _{l}\leq t\leq \tau _{l+1}}|w_{t}-\varepsilon 
\boldsymbol{n}_{l}|>3\sqrt{2}\varepsilon \right\} \\
&\leq &P\left\{ \sup_{l}\sup_{\tau _{l}\leq t\leq \tau
_{l+1}}|w_{t}-\varepsilon \boldsymbol{n}_{l}|>3\sqrt{2}\varepsilon :M\leq
k\right\} +P\left\{ M>k\right\} \\
&\leq &k\left( \frac{1}{3}\right) ^{\left[ \frac{1}{2\varepsilon }\right]
}+\exp \left( -\frac{\varepsilon ^{4}}{2}k\right)
\end{eqnarray*}%
by choosing $k=\frac{1}{\varepsilon ^{6}}$ to obtain%
\begin{eqnarray*}
&&P\left\{ \sup_{l}\sup_{\tau _{l}\leq t\leq \tau _{l+1}}\left(
|w_{t}-\varepsilon \boldsymbol{n}_{l}|\right) >3\sqrt{2}\varepsilon \right\}
\\
&\leq &\frac{1}{\varepsilon ^{6}}\left( \frac{1}{3}\right) ^{\left[ \frac{1}{%
2\varepsilon }\right] }+\exp \left( -\frac{1}{2\varepsilon ^{2}}\right)
\end{eqnarray*}%
so by the Borel-Cantelli lemma, $w(\varepsilon _{n})\rightarrow w$ almost
surely for a properly chosen $\varepsilon _{n}$ such that 
\begin{equation*}
\sum \frac{1}{\varepsilon _{n}^{6}}\left( \frac{1}{3}\right) ^{\left[ \frac{1%
}{2\varepsilon _{n}}\right] }+\exp \left( -\frac{1}{2\varepsilon _{n}^{2}}%
\right) <\infty \text{.}
\end{equation*}
\end{proof}

On the other hand the gap between two boxes $H_{\boldsymbol{z}}^{\varepsilon
}$ and $Z_{\boldsymbol{z}}^{\varepsilon }$ in comparison to the gap between $%
Z_{\boldsymbol{z}}^{\varepsilon }$ and $V_{\boldsymbol{z}}^{\varepsilon }$
is so small, it happens that $M_{H}=M_{Z}$ and $\boldsymbol{n}_{k}=%
\boldsymbol{m}_{k}$ on $\{k\leq M_{H}=M_{Z}\}$ with a large probability,
which is the context of the following lemma.

\begin{lemma}
\label{lemadd1}For any $\varepsilon \in (0,\frac{1}{4})$ we have%
\begin{equation}
P\{M_{H}=M_{Z}\text{ and }\boldsymbol{n}_{k}=\boldsymbol{m}_{k}\text{ for }%
k\leq M_{H}\}\geq \beta _{\varepsilon }\text{ }  \label{est-0q}
\end{equation}%
where $\beta _{\varepsilon }=1-2\varepsilon ^{4}-e^{-\frac{1}{2\varepsilon
^{2}}}$.
\end{lemma}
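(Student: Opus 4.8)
The plan is to exhibit two explicitly controllable bad events whose union, together with a negligible set, contains the complement of the event in~(\ref{est-0q}): one on which the path crosses the $\varepsilon $-grid more than $k=1/\varepsilon ^{6}$ times before time $1$, and one on which, at one of these crossings, the path enters some $Z_{\boldsymbol{z}}^{\varepsilon }$ but then leaves the surrounding cell $V_{\boldsymbol{z}}^{\varepsilon }$ before reaching the inner box $H_{\boldsymbol{z}}^{\varepsilon }$. The first event is estimated exactly as $P\{M_{H}\geq k\}$ was above: neighbouring boxes $Z_{\boldsymbol{z}}^{\varepsilon }$ lie a distance of order $\varepsilon ^{2}$ apart, so the same argument gives $P\{M_{Z}>k\}\leq \exp (-\varepsilon ^{4}k/2)=e^{-1/(2\varepsilon ^{2})}$ (constants absorbed as before). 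For the second, for $1\leq j\leq k$ let $B_{j}$ be the event that $\zeta _{j}\leq 1$ and, from time $\zeta _{j}$ on, the path hits $\partial V_{\boldsymbol{m}_{j}}^{\varepsilon }$ before $H_{\boldsymbol{m}_{j}}^{\varepsilon }$. Since the boxes $Z_{\boldsymbol{z}}^{\varepsilon }$ are pairwise disjoint, $W_{\zeta _{j}}\in \partial Z_{\boldsymbol{m}_{j}}^{\varepsilon }$, so conditioning on $\mathcal{F}_{\zeta _{j}}$ and invoking Lemma~\ref{lemc1} gives $P(B_{j}\mid \mathcal{F}_{\zeta _{j}})\leq \varepsilon ^{10}1_{\{\zeta _{j}\leq 1\}}$, whence $P\big(\bigcup _{j\leq k}B_{j}\big)\leq k\varepsilon ^{10}=\varepsilon ^{4}$.

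The core of the argument is to show that off the events $\{M_{Z}>k\}$ and $B_{1},\dots ,B_{k}$ (and one further negligible set), the two labellings agree. The two geometric inputs are: the cells $V_{\boldsymbol{z}}^{\varepsilon }$ tile the plane (up to tiny corner gaps), so every $Z_{\boldsymbol{z}^{\prime }}^{\varepsilon }$ with $\boldsymbol{z}^{\prime }\neq \boldsymbol{z}$ sits at positive distance from $\overline{V_{\boldsymbol{z}}^{\varepsilon }}$; and $H_{\boldsymbol{z}}^{\varepsilon }\subset Z_{\boldsymbol{z}}^{\varepsilon }$. On $\{M_{Z}\leq k\}\cap \big(\bigcup _{j}B_{j}\big)^{c}$ one argues by induction on $j$ that $\zeta _{j}<\tau _{j}<\zeta _{j+1}$ and $\boldsymbol{n}_{j}=\boldsymbol{m}_{j}$: after $\zeta _{j}$ the path reaches $H_{\boldsymbol{m}_{j}}^{\varepsilon }$ before $\partial V_{\boldsymbol{m}_{j}}^{\varepsilon }$, hence before it can enter any other $Z$-box, that is before $\zeta _{j+1}$; moreover, throughout $(\zeta _{j},\zeta _{j+1})$ the path touches no other $Z$-box, hence no other $H$-box, so the only fresh $H$-entry in that interval is the one into $H_{\boldsymbol{m}_{j}}^{\varepsilon }$, which forces $\boldsymbol{n}_{j}=\boldsymbol{m}_{j}$ (using $\boldsymbol{m}_{j}\neq \boldsymbol{m}_{j-1}=\boldsymbol{n}_{j-1}$). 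Together with $M_{H}\leq M_{Z}$ this gives $M_{H}=M_{Z}$ and $\boldsymbol{n}_{k}=\boldsymbol{m}_{k}$ for $k\leq M_{H}$, with one exception: the last $Z$-crossing before time $1$ might occur so close to $1$ that the traversal of the inner gap $\partial Z_{\boldsymbol{m}_{M_{Z}}}^{\varepsilon }\to H_{\boldsymbol{m}_{M_{Z}}}^{\varepsilon }$, of width $\tfrac12\varepsilon ^{2}\varphi (\varepsilon )$, is not finished by time $1$. Because $\varphi (\varepsilon )\ll \varepsilon ^{\alpha }$ ($\alpha \geq 11$) and there are at most $1/\varepsilon ^{6}$ crossings, this last event has probability $o(\varepsilon ^{4})$, in fact $\leq \varepsilon ^{10}$ uniformly on $(0,\tfrac14)$.

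Collecting the bounds, the complement of the event in~(\ref{est-0q}) has probability at most $e^{-1/(2\varepsilon ^{2})}+\varepsilon ^{4}+\varepsilon ^{10}\leq e^{-1/(2\varepsilon ^{2})}+2\varepsilon ^{4}$, which is exactly the claimed inequality. I expect the main obstacle to be the inductive, pathwise step: converting ``no overshoot past the cell $V_{\boldsymbol{z}}^{\varepsilon }$'' into ``the sequences $(\tau _{k},\boldsymbol{n}_{k})$ and $(\zeta _{k},\boldsymbol{m}_{k})$ track each other'', and handling the comparison of $M_{H}$ with $M_{Z}$ cleanly all the way up to the fixed terminal time $1$. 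The two probabilistic estimates are routine once Lemma~\ref{lemc1} and the crossing-number bound are available.
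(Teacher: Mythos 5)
Your decomposition is essentially the paper's: a crossing-count bound ($P\{M_Z>\varepsilon^{-6}\}\leq e^{-1/(2\varepsilon^{2})}$), an application of Lemma~\ref{lemc1} at each of the at most $\varepsilon^{-6}$ entrances into a $Z$-box, and a pathwise induction showing that off these bad events the two tracking sequences coincide. Whether one sums the failure probabilities ($\varepsilon^{-6}\cdot\varepsilon^{10}=\varepsilon^{4}$) or multiplies the success probabilities ($(1-\varepsilon^{10})^{\varepsilon^{-6}}\geq 1-2\varepsilon^{4}$, as the paper does) is immaterial. Your inductive step is in fact spelled out more carefully than the paper's, whose event $A_{k}=\{\tau_{k}=\zeta_{k}\ \text{and}\ \boldsymbol{n}_{k}=\boldsymbol{m}_{k}\}$ is, read literally, a null event (the path enters $Z_{\boldsymbol{z}}^{\varepsilon}$ strictly before $H_{\boldsymbol{z}}^{\varepsilon}$) and has to be understood exactly as your ``$\zeta_{j}<\tau_{j}<\zeta_{j+1}$ and $\boldsymbol{n}_{j}=\boldsymbol{m}_{j}$''.

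The one step that does not hold up is the third, ``boundary'' event and the bound $\leq\varepsilon^{10}$ you assign to it. That event is $\{\zeta_{M_{Z}}\leq 1<\tau_{M_{Z}}\}$: at time $1$ the path has entered $Z_{\boldsymbol{m}_{M_{Z}}}^{\varepsilon}$ but not yet $H_{\boldsymbol{m}_{M_{Z}}}^{\varepsilon}$, which forces $M_{H}<M_{Z}$. The smallness of $\varphi(\varepsilon)$ is not the relevant quantity here: what you need is that the deterministic time $1$ does not fall inside a traversal window, and the natural estimate goes through the position of $W_{1}$. On the no-overshoot event a path in transit at time $1$ lies in the annulus $V_{\boldsymbol{m}_{M_{Z}}}^{\varepsilon}\setminus H_{\boldsymbol{m}_{M_{Z}}}^{\varepsilon}$, whose width is of order $\varepsilon^{2}$ \emph{independently} of $\varphi$; the union over $\boldsymbol{z}$ of these annuli occupies an area fraction of order $\varepsilon$, so the bound obtained this way is $O(\varepsilon)$, not $O(\varepsilon^{10})$ and not even $o(\varepsilon^{4})$. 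Attempts via a time cutoff fare no better: one must trade $P\{\sigma_{j}>\delta\}$ (exponential at scale $\delta/\varepsilon^{4}$) against the probability of a $Z$-crossing in $(1-\delta,1]$, and no choice of $\delta$ pushes the total below roughly order $\varepsilon$. To be fair, the paper's own proof silently ignores this terminal-time issue (it is buried in the ill-defined condition $\tau_{k}=\zeta_{k}$), and for the use made of the lemma in Section~4 only summability of $1-\beta_{\varepsilon_{n}}$ along a subsequence matters, so an extra $O(\varepsilon)$ term would be harmless there. But as written your argument does not establish the stated constant $\beta_{\varepsilon}=1-2\varepsilon^{4}-e^{-1/(2\varepsilon^{2})}$, and you should either prove a genuine bound for the boundary event or weaken $\beta_{\varepsilon}$ accordingly.
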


\begin{proof}
Let $A_{k}=\{\tau _{k}=\zeta _{k}$ and $\boldsymbol{n}_{k}=\boldsymbol{m}%
_{k}\}$ and $B_{k}=\cap _{l\leq k}A_{l}$. Then, as 
\begin{equation*}
\zeta _{k+1}\geq \zeta _{k}+T_{\partial V_{\boldsymbol{m}_{k}}^{\varepsilon
}}\circ \theta _{\zeta _{k}}\text{,}
\end{equation*}%
by strong Markov property and (\ref{cap-02}), $P\{B_{k+1}|B_{k}\}\geq
1-\varepsilon ^{10}$. Therefore%
\begin{equation*}
P\{B_{[\varepsilon ^{-6}]}\}\geq (1-\varepsilon ^{10})^{\varepsilon ^{6}}%
\text{.}
\end{equation*}%
Since $\varepsilon \in (0,\frac{1}{4})$ and $\log (1-x)\geq -2x$ for $x\in
(0,\frac{1}{2})$ we therefore have%
\begin{equation*}
\varepsilon ^{6}\log (1-\varepsilon ^{10})\geq -2\varepsilon ^{4}
\end{equation*}%
so that 
\begin{equation*}
P\{B_{[\varepsilon ^{-6}]}\}\geq e^{-2\varepsilon ^{4}}\geq 1-2\varepsilon
^{4}\text{.}
\end{equation*}%
On the other hand 
\begin{equation*}
P\{M_{H}>\varepsilon ^{6}\}\leq e^{-\frac{1}{2\varepsilon ^{2}}}
\end{equation*}%
so that%
\begin{eqnarray*}
P\{M_{H} &=&M_{Z}\text{ and }\boldsymbol{n}_{k}=\boldsymbol{m}_{k}\text{ for 
}k\leq M_{H}\} \\
&\geq &P\{B_{[\varepsilon ^{-6}]}\}-P\{M_{H}>\varepsilon ^{6}\} \\
&\geq &1-2\varepsilon ^{4}-e^{-\frac{1}{2\varepsilon ^{2}}}
\end{eqnarray*}%
which proves the lemma.
\end{proof}

\section{Proof of Theorem \protect\ref{th-main}: using the signatures}

This section is devoted to the proof of Theorem \ref{th-main} by using
information of its (extended) Stratonovich signatures. To this end, we need
to choose a good version of multiple iterated Stratonovich's integrals.

Recall that $(\boldsymbol{W},\mathcal{B},P)$ is the classical Wiener space,
where $\boldsymbol{W}$ is the sample space of all continuous paths started
at $0$, on which the coordinate process $(W_{t})_{t\geq 0}$ is Brownian
motion under probability measure $P$. For each path $w\in \boldsymbol{W}$,
and natural number $n$, we consider its dyadic approximations $w^{(n)}\in 
\boldsymbol{W}$ defined to be the polygon assuming the same values as $w$ at
dyadic points $\frac{j}{2^{n}}$ (for $j\in \mathbb{Z}_{+}$). According to
Wong-Zakai \cite{MR0183023} and Ikeda-Watanabe \cite{MR637061}, there is a
subset $\mathcal{N}\subset \boldsymbol{W}$ with probability zero, such that 
\begin{equation*}
\lim_{n\rightarrow \infty }\dint\limits_{s<t_{1}<\cdots <t_{k}<t}\alpha
^{1}(dw_{t_{1}}^{(n)})\cdots \alpha ^{k}(dw_{t_{k}}^{(n)})
\end{equation*}%
exists for every $w\in \boldsymbol{W}\setminus \mathcal{N}$, for all smooth
differential forms $\alpha ^{j}$ with bounded derivatives and for every pair 
$s<t$. The previous limit is denoted by $[\alpha ^{1}\cdots \alpha
^{n}](w)_{s,t}$. We fix such an exceptional set $\mathcal{N}$, and assign $%
[\alpha ^{1}\cdots \alpha ^{n}](w)$ to be zero for $w\in \mathcal{N}$. The
important fact is that $[\alpha ^{1}\cdots \alpha ^{n}]_{s,t}$ is a version
of Stratonovich's iterated integral%
\begin{equation*}
\int_{s<t_{1}<\cdots <t_{k}<t}\alpha ^{1}(\circ dW_{t_{1}})\cdots \alpha
^{k}(\circ dW_{t_{k}})\text{.}
\end{equation*}%
In Lyons and Qian \cite{MR2036784}, a specific exceptional set $\mathcal{N}$
was constructed by means of the so-called $p$-variation metric, which is
however not needed in our proof of the main theorem.

In this section $[\alpha ^{1}\cdots \alpha ^{n}]$ denotes the version of
Stratonovich's iterated integral $[\alpha ^{1}\cdots \alpha ^{n}]_{0,1}$
defined as above, so that $[\alpha ^{1}\cdots \alpha ^{n}]=0$ on $\mathcal{N}
$.

Our goal is to show that $W_{t}$ for all $t\leq 1$ is $\mathcal{G}_{1}$%
-measurable. For $\varepsilon \in (0,1/4)$, and choose $\alpha $ and $\beta $
big enough so that the estimates in Lemmata \ref{lemc1} and \ref{lemc2}
hold. Choose a smooth 1-form on $\mathbb{R}^{2}$, $\phi
(x_{1},x_{2})=f(x_{1},x_{2})dx_{1}$, with a compact support in $Z_{%
\boldsymbol{0}}^{\varepsilon }$ such that $f(x_{1},x_{2})=x_{2}$ on $K_{%
\boldsymbol{0}}^{\varepsilon }$. For each $\boldsymbol{z}\in \mathbb{Z}^{2}$%
, let $\phi ^{\boldsymbol{z}}=\phi (\cdot -\varepsilon \boldsymbol{z})$ (or $%
\phi ^{\boldsymbol{z},\varepsilon }$ if we wish to indicate the dependence
on $\varepsilon $) be the translation of $\phi $ with compact support in $Z_{%
\boldsymbol{z}}^{\varepsilon }$. Therefore, $\{\phi ^{\boldsymbol{z}}:%
\boldsymbol{z}\in \mathbb{Z}^{2}$, $\varepsilon \in (0,1/4)\}$ is a
countable family of non-trivial differential forms with disjoint compacts
for every fixed $\varepsilon $. The key idea, as we have explained in the
Introduction, is to read out the blocks $Z_{\boldsymbol{n}_{l}}^{\varepsilon
}$'s which have been visited by the Brownian motion by using the extended
Stratonovich's signatures of form \ $[\phi ^{\boldsymbol{z}_{1}}\cdots \phi
^{\boldsymbol{z}_{m}}]$.

Let $m\geq 0$. A finite ordered sequence (or called a word) of length $m+1$, 
$\langle \boldsymbol{z}_{0}\cdots \boldsymbol{z}_{m}\rangle $ (where all $%
\boldsymbol{z}$'s belong to the lattice $\mathbb{Z}^{2}$), is admissible if $%
\boldsymbol{z}_{l}\neq \boldsymbol{z}_{l+1}$ for $l=0,\cdots ,m-1$. Let $%
\mathcal{W}_{m}$ denote the set of all admissible words of length $m+1$.

If $w\in \boldsymbol{W}$, 
\begin{equation*}
\hat{M}(w)=\sup \left\{ m:[\phi ^{\boldsymbol{z}_{0}}\cdots \phi ^{%
\boldsymbol{z}_{m}}](w)\neq 0\text{ for some }\langle \boldsymbol{z}%
_{0}\cdots \boldsymbol{z}_{m}\rangle \in \mathcal{W}_{m}\right\}
\end{equation*}%
so that $\hat{M}$ is $\mathcal{G}_{1}$-measurable. For each $m\in \mathbb{N}$
and each admissible word $\langle \boldsymbol{z}_{0}\cdots \boldsymbol{z}%
_{m}\rangle \in \mathcal{W}_{m}$ define%
\begin{equation}
\boldsymbol{A}_{m,\langle \boldsymbol{z}_{0}\cdots \boldsymbol{z}_{m}\rangle
}=\{\hat{M}(w)=m\text{ and }\left[ \phi ^{\boldsymbol{z}_{0}}\cdots \phi ^{%
\boldsymbol{z}_{m}}\right] (w)\neq 0\}\text{.}  \label{dec-02}
\end{equation}

Since $\phi ^{\boldsymbol{z}}$ have disjoint supports, therefore, if $\zeta
_{m+1}(w)>1$, then $\hat{M}(w)$ can not be greater than $m$, so that $\hat{M}%
\leq M_{Z}$ except on the exceptional set $\mathcal{N}$. On the other hand,
according to Lemma \ref{lem-key1} and the strong Markov property, $\hat{M}%
\geq M_{H}$ almost surely. Therefore $M_{H}\leq \hat{M}\leq M_{Z}$ almost
surely.

If $\hat{M}(w)=m$, there is at most one $\langle \boldsymbol{z}_{0}\cdots 
\boldsymbol{z}_{m}\rangle \in \mathcal{W}_{m}$ such that $\left[ \phi ^{%
\boldsymbol{z}_{0}}\cdots \phi ^{\boldsymbol{z}_{m}}\right] (w)\neq 0$ and
all other $[\phi ^{\boldsymbol{z}_{0}^{\prime }}\cdots \phi ^{\boldsymbol{z}%
_{n}^{\prime }}](w)=0$ for $\langle \boldsymbol{z}_{0}^{\prime }\cdots 
\boldsymbol{z}_{n}^{\prime }\rangle \in \mathcal{W}_{n}$ if $n>m$ or if $n=m$
but $\langle \boldsymbol{z}_{0}^{\prime }\cdots \boldsymbol{z}_{m}^{\prime
}\rangle \neq \langle \boldsymbol{z}_{0}\cdots \boldsymbol{z}_{m}\rangle $.

Let%
\begin{equation}
\boldsymbol{\tilde{W}}_{m,\langle \boldsymbol{z}_{0}\cdots \boldsymbol{z}%
_{m}\rangle }=\{M_{H}=m\text{, }\boldsymbol{n}_{l}=\boldsymbol{z}_{l}\text{
for }l=0,\cdots ,m\}\text{.}  \label{dec-01}
\end{equation}%
for each admissible word $\langle \boldsymbol{z}_{0}\cdots \boldsymbol{z}%
_{m}\rangle \in \mathcal{W}_{m}$, and%
\begin{equation*}
\boldsymbol{\tilde{W}}_{\varepsilon }=\dbigcap\limits_{m=0}^{\infty
}\dbigcup\limits_{\langle \boldsymbol{z}_{0}\cdots \boldsymbol{z}_{m}\rangle
\in \mathcal{W}_{m}}\boldsymbol{\tilde{W}}_{m,\langle \boldsymbol{z}%
_{0}\cdots \boldsymbol{z}_{m}\rangle }\text{.}
\end{equation*}%
Then, according to Lemma \ref{est-0q}, $P(\boldsymbol{\tilde{W}}%
_{\varepsilon })\geq \beta _{\varepsilon }$.

We are now in a position to complete our proof. Set 
\begin{equation*}
\boldsymbol{\tilde{n}}_{l}=\sum_{m=0}^{\infty }\sum_{\langle \boldsymbol{z}%
_{0}\cdots \boldsymbol{z}_{m}\rangle \in \mathcal{W}_{m}}\boldsymbol{z}%
_{l}1_{\boldsymbol{A}_{m,\langle \boldsymbol{z}_{0}\cdots \boldsymbol{z}%
_{m}\rangle }}
\end{equation*}%
and redefine 
\begin{equation*}
\hat{w}(\varepsilon )_{t}=\boldsymbol{\tilde{n}}_{l}\varepsilon +\frac{%
t-\tau _{l}}{\tau _{l+1}-\tau _{l}}\boldsymbol{\tilde{n}}_{l+1}\varepsilon 
\text{ \ \ \ if }t\in \lbrack \tau _{l},\tau _{l+1}]
\end{equation*}%
then, we may choose a sequence $\varepsilon _{n}\downarrow 0$ so that $%
\sum_{n}(1-\beta _{\varepsilon _{n}})<\infty $. Then, $\hat{w}(\varepsilon
_{n})=w(\varepsilon _{n})$ almost surely on $\boldsymbol{\tilde{W}}%
_{\varepsilon }$. Since $P(\boldsymbol{\tilde{W}}_{\varepsilon })\geq \beta
_{\varepsilon }$, it follows the Borel-Cantelli lemma, $\sup_{t\in \lbrack
0,1]}|\hat{w}(\varepsilon _{n})-w(\varepsilon _{n})|\rightarrow 0$ in
probability as $n\rightarrow \infty $, and therefore $W_{t}\in \mathcal{G}%
_{1}$ for $t\leq 1$.

\bigskip

\bibliographystyle{amsplain}
%%\bibliography{2010qian}
\bibliography{signature-2011-Feb2-yves}

\noindent {\small \textsc{Yves Le Jan}, D\'{e}partment de Math\'{e}matiques, Universit\'{e} Paris-Sud 11, 91405 Orsay, France. Email: \texttt{yves.lejan@math.u-psud.fr}}

\vskip0.1truecm

\noindent {\small \textsc{Zhongmin Qian}, Mathematical Institute,University of Oxford, Oxford OX1 3LB, England. Email: \texttt{qianz@maths.ox.ac.uk}}

\end{document}